\newcommand{\Z}{\ensuremath{\mathbb{Z}}}
\newcommand{\Q}{\ensuremath{\mathbb{Q}}}
\newcommand{\C}{\ensuremath{\mathbb{C}}}
\DeclareMathOperator{\Hom}{Hom}
\DeclareMathOperator{\Aut}{Aut}
\DeclareMathOperator{\id}{id}
\DeclareMathOperator{\SL}{SL}
\DeclareMathOperator{\GL}{GL}
\DeclareMathOperator{\PGL}{PGL}
\DeclareMathOperator{\Gr}{Gr}
\newcommand{\PP}{\ensuremath{\mathbb{P}}}
\DeclareMathOperator{\Ext}{Ext}
\DeclareMathOperator{\St}{St}
\DeclareMathOperator{\ord}{ord}
\DeclareMathOperator{\univ}{un}
\DeclareMathOperator{\geom}{geo}
\DeclareMathOperator{\integ}{int}
\DeclareMathOperator{\Maps}{\mathcal{F}}
\DeclareMathOperator{\Dist}{Dist}
\DeclareMathOperator{\cind}{c-ind}
\DeclareMathOperator{\HH}{H}
\renewcommand{\det}{\operatorname{det}}
\newcommand{\G}{\ensuremath{\mathcal{G}}}
\DeclareMathOperator{\cont}{ct}
\newcommand{\n}{\ensuremath{\mathfrak{n}}}
\newcommand{\p}{\ensuremath{\mathfrak{p}}}
\newcommand{\LI}{\mathcal{L}}
\newcommand{\into}{\hookrightarrow}
\newcommand{\too}{\longrightarrow}								
\newcommand{\mapstoo}{\longmapsto}
\newcommand{\intoo}{\lhook\joinrel\longrightarrow}
\DeclareRobustCommand\ontoo{\relbar\joinrel\twoheadrightarrow}
\newtheorem{Lem}{Lemma}
\newtheorem{Pro}[Lem]{Proposition}
\newtheorem{Thm}[Lem]{Theorem}
\newtheorem{Def}[Lem]{Definition}
\newtheorem{Rem}[Lem]{Remark}
\newtheorem{Cor}[Lem]{Corollary}
\author[L.~Gehrmann]{Lennart Gehrmann}
\address{L.~Gehrmann \\ Fakult\"at f\"ur Mathematik \\ Universit\"at Duisburg-Essen \\ Thea-Leymann-Stra\ss e 9 \\ 45127 Essen \\ Germany}
\email{lennart.gehrmann@uni-due.de}
\title[Invertible analytic functions and universal extensions]{Invertible analytic functions on Drinfeld symmetric spaces and universal extensions of Steinberg representations}
\subjclass[2010]{Primary 11F85; Secondary 14G22, 11F75, 11F41, 20G25}
\begin{document}
\begin{abstract}
Recently, Gekeler proved that the group of invertible analytic functions modulo constant functions on Drinfeld's upper half space is isomorphic to the dual of an integral generalized Steinberg representation.
In this note we show that the group of invertible functions is the dual of a universal extension of that Steinberg representation.
As an application, we show that lifting obstructions of rigid analytic theta cocycles of Hilbert modular forms in the sense of Darmon--Vonk can be computed in terms of $\LI$-invariants of the associated Galois representation. 
The same argument applies to theta cocycles for definite unitary groups.
\end{abstract}
\maketitle

\tableofcontents


\subsection*{Introduction}
Let $F$ be a local field and $\C_F$ the completion of an algebraic closure of $F$.
In \cite{Drinfeld} Drinfeld introduced the $p$-adic period space $\Omega$ of dimension $n-1$:
it is a rigid analytic variety over $F$ whose set of $\C_F$-points is given by the complement of all $F$-rational hyperplanes in $\PP^{n-1}(\C_F)$.
It carries a natural $\PGL_n(F)$-action.

Let $\mathcal{A}_{\C_F}$ be the ring of analytic functions on $\Omega$ that are defined over $\C_F$.
In \cite{vdP} (see also \cite{FvdP}, Theorem 2.7.11) van der Put constructed for $n=2$ a $\PGL_2(F)$-equivariant isomorphism
$$P\colon \mathcal{A}_{\C_F}^\times/\C_F^{\times} \xlongrightarrow{\cong} \Dist_0(\PP^{1}(F),\Z),$$
where $\Dist_0(\PP^{1}(F),\Z)$ denotes the space of $\Z$-valued distributions on $\PP^{1}(F)$ with total mass $0$.
Recently, Gekeler (see \cite{Gekeler}) generalized van der Put's result to arbitrary dimension and constructed a $\PGL_n(F)$-equivariant isomorphism
$$P\colon \mathcal{A}_{\C_F}^{\times}/\C_F^{\times}  \xlongrightarrow{\cong} \Dist_0(\Gr_{n-1,n}(F),\Z),$$
where $\Gr_{n-1,n}$ denotes the Grassmannian of hyperplanes in $F^{n}$ or, by duality, the projective space of the dual of $F^{n}$.
A similar result holds if one replaces $\mathcal{A}_{\C_F}$ by the ring of analytic functions defined over some extension $E$ of $F$ (see Theorem \ref{Gekeler} for a precise formulation.) 
Even more recently, Junger gave a completely different proof of this result (cf.~\cite{Junger}, Theorem B).

By definition, the space of $\Z$-valued distributions of total mass $0$ on the Grassmannian $\Gr_{n-1,n}(F)$ is the $\Z$-dual of the integral generalized Steinberg representation attached to a maximal parabolic subgroup of $\GL_n$ of type $(n-1,1)$.
The main aim of this note is to determine the class of the extension
$$0 \too \C_F^{\times } \too \mathcal{A}_{\C_F}^{\times}\too \mathcal{A}_{\C_F}^{\times}/\C_F^{\times} \too 0$$
in representation-theoretic terms.
We show that this extension is as non-split as possible.

More precisely, in Section \ref{1} we study extensions of generalized Steinberg representations attached to maximal parabolic subgroups of arbitrary type.
Let $r$ be an integer with $0 < r < n$ and denote by $\Gr_{r,n}(F)$ the Grassmannian of $r$-dimensional subspaces of $F^{n}$.
If $N$ is a topological group we denote by $v_r(N)$ the space of continuous functions from $\Gr_{r,n}(F)$ to $N$ modulo constant functions.
To each continuous homomorphism $\lambda\colon F^{\times}\to N$ we construct an extension
$$0 \too v_r(N) \too \mathcal{E}(\lambda) \too \Z \too 0.$$
These can be viewed as multiplicative refinements of the extensions studied in \cite{Ge4}, Section 2.5, which were inspired by the constructions in Section 2.2 of \cite{Ding}.
Results of Dat (cf.~\cite{Dat}), Orlik (cf.~\cite{Orlik}) and Colmez--Dospinescu--Hauseux--Nizioł (cf.~\cite{CDHN}) imply that the extension attached to the identity $\id\colon F^{\times}\to F^{\times}$ can be regarded as a universal extension (see Proposition \ref{puniv}).

In the second section we relate the $\PGL_n(F)$-module $\mathcal{A}_{\C_F}^{\times}$ to a dual of the universal extension.
Let $\mathcal{E}(\iota)$ be the extension associated with the embedding $\iota\colon F^{\times}\to \C_F^{\times}.$
Taking $\Hom_\Z(\cdot,\C_F^{\times})$ induces an exact sequence
\begin{align}\label{intro1}0 \too \C_F^{\times} \too \Hom_{\Z}(\mathcal{E}(\iota), \C_F^{\times})\too \Hom_{\Z}(v_{n-1}(\C_F^{\times}), \C_F^{\times})\too 0.\end{align}
Basic integration theory (see Section \ref{Integration}) provides a map
\begin{align}\label{basic}
\Hom_\Z(v_{n-1}(\Z),\Z)\too\Hom_{\Z}(v_{n-1}(\C_F^{\times}), \C_F^{\times}).
\end{align}
Let $\mathcal{E}(\iota)^{\vee}$ be the pullback of $\Hom_{\Z}(\mathcal{E}(\iota), \C_F^{\times})$ along \eqref{basic}.
By \eqref{intro1}, it sits inside an exact sequence of the form
$$0 \too \C_F^{\times} \too \mathcal{E}(\iota)^{\vee}\too \Hom_{\Z}(v_{n-1}(\Z), \Z)\too 0.$$
The main result of this article is that there exists a $\PGL_n(F)$-equivariant isomorphism
$\mathcal{E}(\iota)^{\vee}\xlongrightarrow{\cong} \mathcal{A}_{\C_F}^{\times}$
that is compatible with the inverse of Gekeler's isomorphism.
Again, one may replace $\mathcal{A}_{\C_F}$ by the ring of analytic functions defined over some extension $E$ of $F$ (see Theorem \ref{mainthm}).
The case $E=F$ involves the universal extension.

This project started as an attempt to understand the relation between lifting obstructions of rigid analytic theta cocycles in the sense of Darmon--Vonk (cf.~\cite{DV}, \cite{DV2}) and automorphic $\LI$-invariants as introduced by Spie\ss~in \cite{Sp}.
As a first application of our main theorem, we show how lifting obstructions of cuspidal theta cocycles for Hilbert modular groups can be computed in terms of $\LI$-invariants of the associated Galois representations.
We also give an alternative construction of the Dedekind--Rademacher cocycle that was first constructed by Darmon--Pozzi--Vonk using Siegel units (cf.~\cite{DPV2}, Theorem A).
We also discuss the example of theta cocycles for definite unitary groups of arbitrary rank and their connection with automorphic respectively Fontaine--Mazur $\LI$-invariants for higher rank groups as introduced in \cite{Ge4}.
In all of the above cases we first compute the lifting obstruction of theta cocycles in terms of automorphic $\LI$-invariants.
Second, we use the equality of automorphic and Fontaine--Mazur $\LI$-invariants as proven in \cite{Sp3} respectively \cite{GeR}.

\subsection*{Acknowledgements}
It is my pleasure to thank all the participants of the ``$p$-adic Kudla seminar'', which was part of the special semester on number theory at the Centre de Recherches Mathématiques, Montreal.
I thank the organizers of the special semester as well as the staff of the CRM for providing a pleasant work environment amidst the difficult year of 2020. 
While working on this manuscript I was visiting McGill University, supported by Deutsche Forschungsgemeinschaft, and I would like to thank these institutions.
Finally, I thank the referees for their detailed comments that helped to improve the paper immensely.

\subsection*{Notation}
The space of continuous maps from a topological space $X$ to a topological space $Y$ is denoted by $C(X,Y).$
We always endow it with the compact-open topology.
If $A$ and $B$ are topological groups, we write $\Hom(A,B)$ for the space of continuous homomorphism from $A$ to $B$.
All rings will be commutative and unital.
If $R$ is a ring, we write $R^{\times}$ for the group of invertible elements of $R$.

\section{Extensions of generalized Steinberg representations}\label{1}
We construct multiplicative refinements of the extensions of generalized Steinberg representations studied in \cite{Ge4}, Section 2.5.
These refinements were previously constructed in the case $n=2$ in Section 6.1 of \cite{BG2}.

Throughout Section \ref{1} and \ref{2} we fix a non-Archimedean local field $F$ of residue characteristic $p$ and an $F$-vector space $V$ of dimension $n\geq 2$.
If $W$ is any finite-dimensional $F$-vector space, we denote by $\GL_W$ (respectively $\PGL_W$) the general (respectively projective) linear group of $W$ viewed as an algebraic group over $F$.
We endow $\GL_W(F)$ and $\PGL_W(F)$ with the natural topology induced by the one on $F$.
These are locally profinite groups.
We often abbreviate $G=\PGL_V(F).$

\subsection{Generalized Steinberg representations}
We fix an integer $r$ with $0<r<n$ and write $\Gr_{r,V}$ for the Grassmannian variety that parametrizes all $r$-dimensional subspaces of $V$.
We endow $\Gr_{r,V}(F)$ with the natural topology inherited from the one on $F$.
It is a compact, totally disconnected space.
Given an abelian topological group $N$ we define the (continuous) generalized Steinberg representations $v_{r,V}(N)$ as the space of continuous functions from 
$\Gr_{r,V}(F)$ to $N$ modulo constant functions, i.e., 
$$v_{r,V}(N)=C(\Gr_{r,V}(F),N)/N.$$
In the following we often abbreviate $v_{r}(N)=v_{r,V}(N)$.
The group $\PGL_V(F)$ acts on $v_{r}(N)$ via $(g.f)(W)=f(g^{-1}.W)$.
Suppose that $N$ is discrete.
Then the natural map
$$C(X,\Z)\otimes_{\Z}N \xrightarrow{\cong} C(X,N)$$
is an isomorphism for every compact, totally disconnected space $X$ and, therefore, the natural map
$$v_{r}(\Z)\otimes_\Z N\xrightarrow{\cong} v_{r}(N)$$
is an isomorphism.

We fix an $F$-rational point $W_0\in \Gr_{r,V}(F)$.
Its stabilizer $P$ in $\GL_V$ is a maximal proper parabolic subgroup.
The map
$$\GL_V\too \Gr_{r,V},\ g\mapstoo g.W_0$$
induces a $\PGL_V(F)$-equivariant isomorphism
\begin{align}\label{projection} \GL_V\hspace{-0.2em}/P\xlongrightarrow{\cong}\Gr_{r,V}.\end{align}
Thus, we get an isomorphism
$$C(\GL_V(F)/P(F),N)/N \xlongrightarrow{\cong} v_r(N)$$
of $G$-modules.

\begin{Lem}\label{invariants}
Let $N$ be an abelian topological group.
Then we have
$$v_r(N)^{\PGL_V(F)}=0.$$
\end{Lem}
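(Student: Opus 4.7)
The plan is to lift a $G$-invariant class in $v_r(N)$ to a genuine function, extract a continuous homomorphism $G\to N$ measuring the failure of pointwise invariance, and show this homomorphism must be trivial. Let $[f]\in v_r(N)^G$ be represented by $f\in C(\Gr_{r,V}(F),N)$. The invariance hypothesis provides, for each $g\in G=\PGL_V(F)$, a constant $c_g\in N$ with $f(g^{-1}W)=f(W)+c_g$ for every $W$ (writing $N$ additively). Applying this identity twice yields $c_{gh}=c_g+c_h$, so $c\colon g\mapsto c_g$ is a group homomorphism $G\to N$; the specialization $W=W_0$ gives the explicit formula $c_g=f(g^{-1}W_0)-f(W_0)$, which makes continuity immediate and shows that $c$ vanishes on the stabilizer $P=\mathrm{Stab}_G(W_0)$.

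The next step is to deduce $c\equiv 0$ from its vanishing on $P$. Since $N$ is abelian, $c$ vanishes on every $G$-conjugate of $P$, and hence on the normal subgroup $H\subseteq G$ they generate. The key structural input is $H=G$, i.e., the normal closure of a maximal parabolic of $\PGL_V(F)$ is all of $\PGL_V(F)$. I would deduce this by combining two classical facts, using that $F$ is infinite and $n\geq 2$: first, $\mathrm{PSL}_V(F)$ is simple and acts transitively on $\Gr_{r,V}(F)$, so $P\cap\mathrm{PSL}_V(F)$ is a proper nontrivial subgroup whose normal closure in $\mathrm{PSL}_V(F)$ is all of $\mathrm{PSL}_V(F)$, yielding $H\supseteq\mathrm{PSL}_V(F)$; second, $P$ surjects onto $G/\mathrm{PSL}_V(F)\cong F^\times/(F^\times)^n$ via the determinant, because $P$ contains diagonal elements of arbitrary determinant. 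Together these force $H=G$ and therefore $c\equiv 0$.

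With $c\equiv 0$, the equation $f(g^{-1}W_0)=f(W_0)$ holds for every $g\in G$, and the transitivity of the $G$-action on $\Gr_{r,V}(F)$ (cf.~\eqref{projection}) then forces $f$ to be constant, so $[f]=0$ in $v_r(N)$. The main obstacle is the structural claim that the normal closure of a maximal parabolic of $\PGL_V(F)$ equals the whole group; once that is in hand, everything else is essentially unwinding the definitions.
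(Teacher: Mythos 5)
Your argument is correct, and its first half is the paper's: invariance of the class produces a homomorphism into $N$ that kills the stabilizer $P$ of $W_0$ (the paper normalizes $\Phi(1)=0$ and works on $\GL_V(F)$, so that $\Phi$ itself is this homomorphism; your $c_g=f(g^{-1}W_0)-f(W_0)$ is the same object read on $\PGL_V(F)$). You diverge in the concluding step. The paper uses that $\SL_V(F)$ is the commutator subgroup of $\GL_V(F)$, so any homomorphism to the abelian group $N$ factors through $\det$, and then dies because $\det(P(F))=F^{\times}$. You instead prove that the normal closure of $P$ in $\PGL_V(F)$ is the whole group, combining simplicity of $\mathrm{PSL}_V(F)$ (valid since $F$ is infinite and $n\geq 2$) with the fact that $P$ contains classes of diagonal matrices of arbitrary determinant, hence surjects onto $\PGL_V(F)/\mathrm{PSL}_V(F)\cong F^{\times}/(F^{\times})^{n}$. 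Both routes hinge on the same final ingredient, namely $\det(P(F))=F^{\times}$; the paper's detour through the abelianization is a touch more elementary (it needs only that $\SL_n(F)$ is the derived subgroup, not simplicity of $\mathrm{PSL}_n(F)$), while yours isolates a reusable group-theoretic statement (a homomorphism to an abelian group vanishing on a point stabilizer whose normal closure is everything must vanish) and never lifts to $\GL_V$. One cosmetic remark: continuity of $c$ is never needed, in either proof.
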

\begin{proof}
Let $\overline{\Phi}$ be an element of $v_r(N)^{\PGL_V(F)}$ and $\Phi\in C(\Gr_{r,V}(F),N)$ a representative of $\overline{\Phi}$.
By \eqref{projection} we may view $\Phi$ as a function on $\GL_V(F)$.
Invariance of $\overline{\Phi}$ implies that for all $g\in \GL_V$ there exists a constant $c(g)\in N$ such that
$$\Phi(gg^{\prime}) = c(g) + \Phi(g^{\prime})$$
holds for all  $g^{\prime}\in \GL_V(F)$.
We may assume that $\Phi(1)=0.$
Thus, $c(g)=\Phi(g)$ and, therefore, $\Phi\colon \GL_N(V)\to N$ is a group homomorphism, which is trivial on $P(F)$.
Since $\SL_V(F)$ is the commutator subgroup of $\GL_V$, we see that $\Phi$ factors over the determinant.
Therefore, $\Phi$ is trivial since it is trivial on $P(F)$.
\end{proof}

\subsection{Integration}\label{Integration}
Let $X$ be a compact, totally disconnected space.
We write $\Dist(X,\Z)$ for the space of $\Z$-valued distributions on $X$, i.e.
$$\Dist(X,\Z)=\Hom_{\Z}(C(X,\Z),\Z).$$
For every abelian prodiscrete group $N$ there exists an integration pairing
\begin{align*}
C(X,N)\otimes \Dist(X,\Z)\too N
\end{align*}
constructed as follows:
for any discrete group $B$ the canonical map
$$C(X,\Z)\otimes B \xlongrightarrow{\cong} C(X,B)$$
is an isomorphism and, thus, we have a canonical pairing
$$ C(X,B)\otimes \Dist(X,\Z)\too B$$
that is functorial in $B$.
Choose a basis of neighbourhoods $\left\{U_i\right\}$ of the identity of $N$ consisting of open subgroups with $U_{i+1}\subseteq U_i$.
Taking projective limits we get a pairing
$$ \varprojlim_{i}C(X,N/U_i)\otimes \Dist(X,\Z)\too \varprojlim_{i} N/U_i=N.$$
The canonical map
$$C(X,N)\xlongrightarrow{\cong} \varprojlim_{i}C(X,N/U_i)$$
is an isomorphism and hence, we constructed the desired pairing.

In particular, for every abelian prodiscrete group $N$ the integration pairing induces a map
$$\integ\colon\Hom_{\Z}(v_{r}(\Z),\Z)\too \Hom_{\Z}(v_{r}(N),N).$$

By similar arguments, we can define a canonical map
$$\Hom_{\Z_p}(v_{r}(\Z_p),\Z_p)\too \Hom_{\Z_p}(v_{r}(N),N)$$
for every abelian pro-$p$ group $N$.

\subsection{Preliminaries on continuous extensions}
Let $R$ be a ring and $H$ a topological group.
A topological $R[H]$-module is a topological abelian group $M$ that is also an $R[H]$-module such that the action
$$H\times M \too M$$
is continuous.
A homomorphism between topological $R[H]$-modules is a homomorphism of the underlying $R[H]$-modules that is continuous.
We say that a sequence
$$0\too M_1 \xlongrightarrow{f} M_2 \xlongrightarrow{g} M_3 \too 0$$
of topological $R[H]$-modules is exact if it is exact in the category of $R[H]$-modules and the map $g$ admits a continuous (but not necessarily linear) section $s$ that induces a homeomorphism
\begin{align}\label{homeomorphism}
(f,s)\colon M_1 \times M_3 \xlongrightarrow{\cong} M_2.
\end{align}
We say that $M_2$ is a continuous extension of $M_1$ by $M_3$ in this case.
Two exact sequences
\begin{align*}
0\too M_1 \longrightarrow M_2 \longrightarrow M_3 \too 0 \\
\intertext{and}
0\too M_1 \longrightarrow \tilde{M}_2 \longrightarrow M_3 \too 0
\end{align*}
are said to be equivalent if there exists an $R[H]$-linear map
$$\varphi\colon M_2 \too \tilde{M}_2$$
such that the diagram
\begin{center}
 \begin{tikzpicture}
    \path 	(0.2,0) node[name=A]{$0$}
		(1.5,0) node[name=B]{$M_1$}
		(3,0) node[name=C]{$M_2$}
		(4.5,0) node[name=D]{$M_3$}
		(5.8,0) node[name=E]{$0$}
		(0.2,-1) node[name=F]{$0$}
		(1.5,-1) node[name=G]{$M_1$}
		(3,-1) node[name=H]{$\tilde{M}_2$}
		(4.5,-1) node[name=I]{$M_3$}
		(5.8,-1) node[name=J]{$0$};
    \draw[->] (A) -- (B) ;
		\draw[->] (B) -- (C) ;
		\draw[->] (C) -- (D) ;
		\draw[->] (D) -- (E) ;
		\draw[->] (F) -- (G) ;
		\draw[->] (G) -- (H) ;
		\draw[->] (H) -- (I) ;
		\draw[->] (I) -- (J) ;
    \draw[->] (B) -- (G) node[midway, right]{$=$};
    \draw[->] (C) -- (H) node[midway, right]{$\varphi$};
    \draw[->] (D) -- (I) node[midway, right]{$=$};
  \end{tikzpicture} 
\end{center}
commutes.
One deduces from the existence of sections fulfilling \eqref{homeomorphism} that $\varphi$ is automatically a homeomorphism.

Given two topological $R[H]$-modules $M_1$ and $M_2$ we write $\Ext_{R[H],\cont}^1(M_1,M_2)$ for the set of continuous extensions of $M_1$ by $M_2$ up to equivalence.
The Baer sum defines the structure of an abelian group on $\Ext_{R[H],\cont}^1(M_1,M_2).$
Forgetting the topology induces an inclusion
$$\Ext_{R[H],\cont}^1(M_1,M_2)\intoo \Ext_{R[H]}^1(M_1,M_2).$$
In case $R=\Z$ the group $\Ext_{\Z[H],\cont}^1(M_1,M_2)$ agrees with the group $\Ext_{\mathcal{S}}^1(M_1,M_2)$ defined in Section 2 of \cite{Stasheff}.
In particular, considering $\Z$ as a discrete topological space with trivial $H$-action, we have for every topological $\Z[H]$-module a natural isomorphism
$$\Ext_{\Z[H],\cont}^1(\Z,M)\cong \HH^1_{\cont}(H,M)$$
where $\HH^1_{\cont}(H,M)$ denotes the first continuous cohomology group.

\subsection{Continuous induction}
Given a topological $R[P(F)]$-module $M$ we define its continuous induction as
$$i_P(M)=\left\{\varphi\colon \GL_V(F)\to M\ \mbox{cont.}\mid \varphi(gp)=p.\phi(g)\ \forall p\in P(F), g\in \GL_V(F)\right\}.$$
The group $\GL_V(F)$ acts on $i_P(M)$ via left translation.
The continuous induction $i_P(M)\subseteq C(\GL_V(M), M)$ becomes a topological $R[\GL_V(F)]$-module by endowing it with the subspace topology.

A homomorphism
$$f\colon M_1 \too M_2$$
of topological $R[P(F)]$-modules induces the homomorphism
$$i_P(f)\colon i_P(M_1)\too i_P(M_2),\ \varphi \mapstoo f\circ\varphi$$
of topological $R[\GL_V(F)]$-modules.

Suppose $\GL_V(F)$ acts trivially on $M$.
Then the induction $i_P(M)$ is by definition the space $C(\GL_V(F)/P(F),M)$ which we identify with $C(\Gr_{r,V}(F),M)$ via \eqref{projection}.
This in turn induces a $\GL_V(F)$-equivariant isomorphism $i_P(M)/M\cong v_{r}(M)$.

\begin{Lem}\label{exact}
For every exact sequence
$$0\too M_1 \xlongrightarrow{f}M_2 \xlongrightarrow{g} M_3 \too 0$$
of topological $R[P(F)]$-modules, the induced sequence
$$0\too i_P(M_1) \xlongrightarrow{i_P(f)}i_P(M_2) \xlongrightarrow{i_P(g)} i_P(M_3) \too 0$$
is an exact sequence of topological $R[\GL_V(F)]$-modules.
\end{Lem}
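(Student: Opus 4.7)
The plan is to verify exactness of the induced sequence at each of the three positions, with surjectivity of $i_P(g)$ being the only substantial step. Injectivity of $i_P(f)$ is immediate from the pointwise injectivity of $f$. For exactness at $i_P(M_2)$: if $\varphi \in i_P(M_2)$ satisfies $g\circ\varphi = 0$, then $\varphi$ takes values in $\ker(g) = f(M_1)$. The homeomorphism \eqref{homeomorphism} implies that $f$ is a topological embedding onto a closed subgroup, so $f^{-1}$ is continuous on $f(M_1)$, and $f^{-1}\circ\varphi$ defines the required preimage in $i_P(M_1)$.

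For the surjectivity of $i_P(g)$, let $\psi\in i_P(M_3)$. The definition of continuous extension supplies a continuous section $s\colon M_3\to M_2$ of $g$, but $s$ need not be $P(F)$-equivariant, so $s\circ\psi$ is generally not a valid lift. The remedy is to exploit local sections of the principal $P(F)$-bundle $\pi\colon \GL_V(F)\to \Gr_{r,V}(F)$. Since $\Gr_{r,V}(F)$ is compact and totally disconnected, and $\pi$ is locally trivial (via the Iwasawa decomposition $\GL_V(F)=K\cdot P(F)$ with $K$ a maximal compact open subgroup), one can choose a finite clopen partition $\Gr_{r,V}(F)=\bigsqcup_i V_i$ together with continuous sections $\sigma_i\colon V_i\to \GL_V(F)$ of $\pi$. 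Every $g\in \pi^{-1}(V_i)$ factors uniquely as $g=\sigma_i(\pi(g))\,p_i(g)$, with $p_i\colon \pi^{-1}(V_i)\to P(F)$ continuous. One then defines $\tilde\psi$ on each piece $\pi^{-1}(V_i)$ by applying $s$ to $\psi(\sigma_i(\pi(g)))$ and translating by $p_i(g)$ so as to enforce $P(F)$-equivariance. A short check confirms that $\tilde\psi$ is continuous, $P(F)$-equivariant, and satisfies $g\circ\tilde\psi=\psi$.

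To upgrade algebraic exactness to exactness in the sense of \eqref{homeomorphism}, observe that all ingredients in the construction of $\tilde\psi$ depend continuously on $\psi$, so the map $\psi\mapsto\tilde\psi$ is a continuous set-theoretic section of $i_P(g)$. Combined with $i_P(f)$, this yields the required homeomorphism $i_P(M_1)\times i_P(M_3)\xrightarrow{\cong} i_P(M_2)$.

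The main obstacle is ensuring the existence of the continuous local sections $\sigma_i$ and the bookkeeping needed to make $\tilde\psi$ simultaneously continuous and $P(F)$-equivariant. Both are standard consequences of the Iwasawa decomposition together with the compact totally disconnected geometry of $\Gr_{r,V}(F)$, and they constitute the only nontrivial use of the structure of $\GL_V(F)$ in the argument.
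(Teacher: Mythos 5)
Your proof is correct and takes essentially the same approach as the paper: both arguments reduce the only nontrivial point, surjectivity of $i_P(g)$ together with a topological section, to local triviality of the $P(F)$-bundle $\GL_V(F)\to \GL_V(F)/P(F)$, use compactness and total disconnectedness to get a finite clopen trivialization, and lift $\psi$ piecewise by translating a fixed continuous section $s\colon M_3\to M_2$. The only cosmetic difference is that you derive local triviality from the Iwasawa decomposition $\GL_V(F)=K\cdot P(F)$, while the paper invokes the Bruhat decomposition; both yield the same construction.
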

\begin{proof}
The only non-trivial step is to show that $i_P(g)$ is surjective and that it admits a topological section.

By the Bruhat decomposition the map
$$\pi\colon \GL_V\too \GL_V/P$$
is a locally trivial fibration.
Thus, since $\GL_V(F)/P(F)$ is totally disconnected and compact, we can find finitely many closed subsets $U_i\subseteq \GL_V(F)$ such that the maps
$$U_i \times P(F) \too \GL_V(F),\quad (u,p)\mapstoo u\cdot p$$
are homeomorphisms onto their images, their images are disjoint and form an open cover of $\GL_V(F)$.

Let $\varphi\colon \GL_V(F)\to M_3$ be an element of $i_P(M_3)$.
We construct a preimage $\Phi\colon \GL_V(F)\too M_2$ of $\varphi$ as follows:
every $g$ in $G$ can be uniquely written as a product $u\cdot p$ with $u$ in a unique $U_i$ and $p\in P(F)$.
We put $\Phi(g)=p.s(\varphi(u)),$ where $s\colon M_3 \to M_2$ is a fixed section of $g$.
Thus, the homomorphism $i_P(g)$ is surjective.
Moreover, the map sending $\varphi$ to $\Phi$ defines a topological section of $i_P(g)$.
\end{proof}

\subsection{Extensions}
Let $R$ be a topological ring and $N$ a topological $R$-module, i.e., $N$ is an abelian topological group and an $R$-module such that the multiplication map $R\times N\to N$ is continuous.
We consider both $R$ and $N$ as $P(F)$-modules via the trivial action.

To a continuous group homomorphism $\lambda\colon P(F)\to N$ we attach the topological $R[P(F)]$-module $M_\lambda=N \oplus R$ with the $P(F)$-action given by
$$p.(n,r)=(n+r\cdot\lambda(p),r)$$
for $p\in P(F)$, $n\in N$ and $r\in R$.
By definition $M_\lambda$ is a continuous extension of $R$ by $N$.
By Lemma \ref{exact} the sequence
$$0\too i_P(N)\too i_P(M_\lambda) \too i_P(R) \too 0$$
is an exact sequence of topological $R[\GL_V(F)]$-modules.
Let $\widetilde{\mathcal{E}}(\lambda)$ be the pullback of $i_P(M_\tau)$ along $R\into i_P(R)$, i.e., $\widetilde{\mathcal{E}}(\lambda)$ sits inside an exact sequence
$$0\too i_P(N)\too \widetilde{\mathcal{E}}(\lambda)\too R \too 0$$
of topological $R[\GL_V(F)]$-modules.
More concretely we can identify $\widetilde{\mathcal{E}}(\lambda)$ with the set of pairs $(\Phi,r)\in C(\GL_V(F),N)\times R$ such that
$$\Phi(gp)=\Phi(g) + r\cdot \lambda(p)$$
for all $p\in P(F)$ and $g\in \GL_V(F)$.
The group $\GL_V(F)$ acts via left multiplication on the first factor.
The subspace $\widetilde{\mathcal{E}}(\lambda)_0$ of tuples of the form $(\Phi,0)$ with constant $\Phi$ is $\GL_V(F)$-invariant.
By definition the quotient $\mathcal{E}(\lambda)=\widetilde{\mathcal{E}}(\lambda) /\widetilde{\mathcal{E}}(\lambda)_0$ sits inside an exact sequence
$$0\too v_{r}(N)\too \mathcal{E}(\lambda)\too R \too 0$$
of topological $R[\GL_V(F)]$-modules.
It is easy to see that the centre of $\GL_V(F)$ acts trivially on $\mathcal{E}(\lambda)$.
Let $b_{\lambda,R}$ be the associated class in $\Ext^{1}_{R[G],\cont}(R,v_{r}(N))$.
\begin{Lem}
The map $$\Hom(P(F),N)\too \Ext^{1}_{R[G],\cont}(R,v_{r}(N)),\quad \lambda\mapstoo b_{\lambda,R}$$
is a group homomorphism that is functorial in $R$ and $N$.
\end{Lem}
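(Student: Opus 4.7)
The plan is to verify the two assertions separately; the additivity is the substantive part. The strategy is to factor the construction $\lambda \mapsto b_{\lambda,R}$ as a composition of well-behaved operations on continuous extensions, and then reduce additivity to a direct calculation with $M_\lambda$.

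For functoriality, observe that every step in the construction of $\mathcal{E}(\lambda)$ --- forming the topological $R[P(F)]$-module $M_\lambda = N \oplus R$, applying the continuous induction functor $i_P$, pulling back along $R \hookrightarrow i_P(R)$, and pushing out along $i_P(N) \twoheadrightarrow v_r(N)$ --- is manifestly natural in $N$ and $R$. Given compatible morphisms $N \to N'$ of topological modules and $R \to R'$ of topological rings, they induce compatible morphisms at each stage and produce the desired naturality of $\lambda \mapsto b_{\lambda,R}$ in $\Ext^{1}_{R[G],\cont}$.

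For the additivity, I would interpret $b_{\lambda,R}$ as the image of the class of $0 \to N \to M_\lambda \to R \to 0$ in $\Ext^{1}_{R[P(F)],\cont}(R,N)$ under the composition of three operations: (i) application of $i_P$; (ii) pullback along $R \hookrightarrow i_P(R)$; and (iii) pushout along $i_P(N) \twoheadrightarrow v_r(N)$. Operations (ii) and (iii) are Baer-additive by general nonsense, and (i) is additive because $i_P$ is exact (Lemma \ref{exact}) and therefore commutes with the finite limits and colimits that enter the Baer sum construction. It thus suffices to show that $\lambda \mapsto [M_\lambda]$ is a homomorphism $\Hom(P(F),N) \to \Ext^{1}_{R[P(F)],\cont}(R,N)$. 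The Baer sum of $M_{\lambda_1}$ and $M_{\lambda_2}$ is computed as the pushout along $N \oplus N \to N$, $(n_1,n_2) \mapsto n_1 + n_2$, of the fibre product $M_{\lambda_1} \times_R M_{\lambda_2}$; writing out the $P(F)$-action on each term and tracking the quotient identifies the resulting module canonically with $M_{\lambda_1 + \lambda_2}$.

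The main obstacle is bookkeeping the topological structure: each pullback, pushout and quotient involved must produce a \emph{continuous} extension in the sense of \eqref{homeomorphism}, not merely an algebraic one. This reduces to exhibiting continuous (not necessarily linear) sections at every step; these are provided for $i_P$ by Lemma \ref{exact} and are elementary for the finite pullbacks, pushouts and the quotient by constants. Once this is in place, the isomorphisms exhibiting both the group law and the functoriality are automatically homeomorphisms by the remark following \eqref{homeomorphism}.
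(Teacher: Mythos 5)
Your proposal is correct, and it arrives at the same essential computation as the paper but packages it differently. The paper proves additivity by a single direct calculation at the top of the construction: it describes the Baer sum of $\mathcal{E}(\lambda)$ and $\mathcal{E}(\lambda^{\prime})$ concretely as the set of triples $(\overline{\Phi_1},\overline{\Phi_2},r)$ with $(\overline{\Phi_i},r)$ in the respective extensions, modulo antidiagonal constants, and then simply writes down the map $(\overline{\Phi_1},\overline{\Phi_2},r)\mapsto(\overline{\Phi_1+\Phi_2},r)$ to $\mathcal{E}(\lambda+\lambda^{\prime})$, which is an equivalence of extensions; functoriality is declared immediate from the construction, as you also do. You instead push the Baer-sum computation down to the level of the $P(F)$-modules $M_\lambda$, where the identification of the Baer sum of $M_{\lambda_1}$ and $M_{\lambda_2}$ with $M_{\lambda_1+\lambda_2}$ is trivial, and then transport it through the three stages $i_P$, pullback along $R\into i_P(R)$, and pushout along $i_P(N)\onto v_r(N)$, using that each induces an additive map on continuous Ext groups. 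This is a legitimate and arguably more conceptual route (it makes both additivity and functoriality near-tautological), but it buys this at the cost of the claims you correctly flag: that the pullback, pushout and Baer-sum constructions can be carried out within the category of \emph{continuous} extensions, i.e.\ with sections satisfying \eqref{homeomorphism}, and that $i_P$ commutes with the fibre product and quotient entering the Baer sum --- the latter following from Lemma \ref{exact} together with the fact that $i_P$, being a subspace of a mapping space, preserves the relevant fibre products. The paper's explicit map with triples avoids having to make these intermediate operations additive and is correspondingly shorter; your version would want a sentence or two actually exhibiting the continuous sections for the pullback and the quotient by constants rather than calling them elementary, since the paper's notion of exactness is strictly topological.
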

\begin{proof}
Functoriality in $R$ and $N$ follows directly from the construction.
Thus we only have to show that the map is a group homomorphism:
let $\lambda, \lambda^{\prime}\colon P(F)\to A$ be continuous homomorphisms.
The Baer sum of the two extensions $\mathcal{E}(\lambda)$ and $\mathcal{E}(\lambda^{\prime})$ is the space of triples $(\overline{\Phi_1},\overline{\Phi_2},r)$ with  $(\overline{\Phi_1},r)\in \mathcal{E}(\lambda)$ and $(\overline{\Phi_2},r)\in\mathcal{E}(\lambda^{\prime})$) modulo triples of the form $(\overline{\Phi},-\overline{\Phi},0)$ with $\overline{\Phi} \in v_{r}(A)$.
Sending a triple $(\overline{\Phi_1},\overline{\Phi_2},r)$ to the tuple $(\overline{\Phi_1+\Phi_2},r)$ defines a map from the Baer sum to $\mathcal{E}(\lambda+\lambda^{\prime})$ and thus, they define the same extension class.
\end{proof}
\begin{Lem}\label{trivial}
We have $b_{\lambda,R}=0$ if and only if $\lambda$ can be extended to a continuous homomorphism $\lambda\colon \GL_V(F)\to N.$
\end{Lem}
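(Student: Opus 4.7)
The plan is to unfold the definition of $\mathcal{E}(\lambda)$ and reduce triviality of the extension class to the existence of a $G$-invariant lift of $1\in R$. Since the $G$-action on $R$ is trivial, $b_{\lambda,R}=0$ is equivalent to the existence of a continuous $R[G]$-linear section $\sigma\colon R\to \mathcal{E}(\lambda)$, and by $R$-linearity such a section is determined by $\sigma(1)$, which must be a $G$-invariant preimage of $1\in R$. I would pick a representative $(\Phi,1)\in \widetilde{\mathcal{E}}(\lambda)$ of $\sigma(1)$; replacing $\Phi$ by $\Phi-\Phi(e)$ (which only modifies the representative by an element of $\widetilde{\mathcal{E}}(\lambda)_0$) I may assume $\Phi(e)=0$.

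For the implication $b_{\lambda,R}=0\Rightarrow \lambda$ extends, I would exploit $G$-invariance of $\sigma(1)$: for every $g'\in \GL_V(F)$ the difference $(g'.\Phi - \Phi,0)$ must lie in $\widetilde{\mathcal{E}}(\lambda)_0$, hence $\Phi(g'^{-1}g)-\Phi(g)$ is a constant $c(g')\in N$ independent of $g$. Evaluating at $g=e$ and using $\Phi(e)=0$ gives $c(g')=\Phi(g'^{-1})$, so $\Phi(g'^{-1}g)=\Phi(g)+\Phi(g'^{-1})$ for all $g,g'$, i.e.\ $\Phi\colon \GL_V(F)\to N$ is a continuous group homomorphism. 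Moreover, the defining property $\Phi(gp)=\Phi(g)+\lambda(p)$ together with $\Phi(e)=0$ yields $\Phi|_{P(F)}=\lambda$, so $\Phi$ is the sought continuous extension of $\lambda$.

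For the converse, given a continuous homomorphism $\tilde{\lambda}\colon \GL_V(F)\to N$ restricting to $\lambda$, I would define
$$\sigma\colon R\too \mathcal{E}(\lambda),\quad r\mapstoo \overline{(r\tilde{\lambda},r)}.$$
The relation $\tilde{\lambda}(gp)=\tilde{\lambda}(g)+\lambda(p)$ shows $(r\tilde{\lambda},r)\in \widetilde{\mathcal{E}}(\lambda)$, and $\sigma$ is continuous and $R$-linear by construction. For $G$-equivariance, one computes $g'.(r\tilde{\lambda})(g)-r\tilde{\lambda}(g) = r\tilde{\lambda}(g'^{-1})$, which is constant in $g$, so $g'.(r\tilde{\lambda},r)-(r\tilde{\lambda},r)\in \widetilde{\mathcal{E}}(\lambda)_0$ and $\sigma(r)$ is $G$-invariant in $\mathcal{E}(\lambda)$. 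Since $\sigma$ is a continuous $R[G]$-linear section, $b_{\lambda,R}=0$.

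The argument is essentially a direct computation; the only point requiring care is the normalization $\Phi(e)=0$ and the bookkeeping that modifications by $\widetilde{\mathcal{E}}(\lambda)_0$ do not change the class, which is what lets $G$-invariance in the quotient $\mathcal{E}(\lambda)$ be promoted to the homomorphism identity for $\Phi$ itself. No deeper input beyond the definitions is needed.
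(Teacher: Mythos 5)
Your proof is correct and follows essentially the same route as the paper's: both reduce $b_{\lambda,R}=0$ to the existence of a $G$-invariant element $(\overline{\Phi},1)\in\mathcal{E}(\lambda)$, normalize $\Phi(e)=0$, and read off from the invariance condition that $\Phi$ is a homomorphism extending $\lambda$. You merely spell out the converse direction and the bookkeeping with $\widetilde{\mathcal{E}}(\lambda)_0$ more explicitly than the paper does.
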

\begin{proof}
The class $b_{\lambda,R}$ is split if and only if there exists an element $(\overline{\Phi}, 1)\in \mathcal{E}(\lambda)$ such that $\overline{\Phi}$ is $\GL_V(F)$-invariant, i.e., for every $g\in \GL_V(F)$ there exists a constant $c(g)\in N$ such that
$$\Phi(gg^{\prime})=c(g)+\Phi(g^{\prime})$$
for all $g^{\prime}\in \GL_V(F)$.
We may assume that $\Phi(1)=0$.
But then $c(g)=\Phi(g)$ and, hence, $\Phi$ is a homomorphism that extends $\lambda$.
\end{proof}

\begin{Rem}
Note that the underlying $R[G]$-module of $\mathcal{E}(\lambda)$ does not depend on the topology on $R$.

One should view the representations $i_P(M_\lambda)$ as infinitesimal deformations of $i_P(R)$:
consider $R\oplus N$ as an $R$-algebra by putting $n_1 \cdot n_2 = 0$ for all $n_1,n_2\in N.$
For example, if $N$ is a free $R$-module of rank $s$, there exists an $R$-algebra isomorphism
$$R\oplus N\cong R[\epsilon_1,\ldots,\epsilon_s]/(\epsilon_1^{2},\ldots,\epsilon_s^{2}).$$
The map
\begin{align*}\Hom(F^{\times},N)&\too \left\{\chi\in \Hom(F^{\times},(R\oplus N)^{\times})\mid \chi \equiv 1 \bmod N\right\}\\
\lambda &\mapstoo \chi_{\lambda}=1 +\lambda
\end{align*}
is an isomorphism and the induction $i_P(M_\lambda)=i_P(\chi_{\lambda})$ is a module over $R\oplus N$.

\end{Rem}

\subsection{Homomorphisms}
We keep the notations from last section.
There is a canonical isomorphism
\begin{align}\label{isom}\Hom(P(F),N)\xrightarrow{\cong}\Hom(F^{\times},N)^{2},\quad \lambda\mapstoo (\lambda_1,\lambda_2),\end{align}
which is functorial in $N$:
every homomorphism from $P(F)$ to $N$ has to be trivial on the unipotent radical of $P(F)$.
Thus, it factors through the canonical map
$$P(F)\too \GL_{W_0}(F) \times \GL_{V\hspace{-0.2em}/W_0}(F).$$
Since $\SL_W(F)$ is the commutator subgroup of $\GL_W(F)$ for every finite-dimensional $F$-vector space $W$, every homomorphism
$$\lambda\colon\GL_{W_0}(F) \times \GL_{V\hspace{-0.2em}/W_0}(F) \too N$$
is of the form
$$\lambda(g_1,g_2)=\lambda_1(\det(g_1))+ \lambda_2(\det(g_2)) $$
for unique homomorphisms $\lambda_i\colon F^{\times} \to N.$
We will identify $\lambda$ with the pair $(\lambda_1,\lambda_2)$.

By the same argument every continuous homomorphism $\lambda\colon \GL_V(F)\to N$ is of the form $\lambda=\lambda^{\prime}\circ\det$ for a unique continuous homomorphism $\lambda^{\prime}\colon F^{\times}\to N$.
Therefore, Lemma \ref{trivial} implies the following:
\begin{Cor}
We have $b_{\lambda,R}=0$ if and only if $\lambda_1=\lambda_2.$
\end{Cor}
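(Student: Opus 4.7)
The plan is to combine Lemma \ref{trivial} with the structural description of continuous homomorphisms on $P(F)$ and on $\GL_V(F)$ given immediately before the statement.

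By Lemma \ref{trivial}, the class $b_{\lambda,R}$ vanishes if and only if $\lambda\colon P(F)\to N$ extends to a continuous homomorphism $\tilde\lambda\colon \GL_V(F)\to N$. The paragraph preceding the corollary shows that any such $\tilde\lambda$ factors as $\tilde\lambda = \lambda^{\prime}\circ\det$ for a unique continuous homomorphism $\lambda^{\prime}\colon F^{\times}\to N$.

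For the forward implication, I would restrict such a $\tilde\lambda = \lambda^{\prime}\circ\det$ to the Levi quotient $\GL_{W_0}(F)\times \GL_{V/W_0}(F)$ of $P(F)$: on a block-diagonal element $(g_1,g_2)$ one has $\det(g_1 g_2) = \det(g_1)\det(g_2)$, hence $\tilde\lambda(g_1,g_2) = \lambda^{\prime}(\det(g_1)) + \lambda^{\prime}(\det(g_2))$. Comparing with the canonical decomposition $\lambda(g_1,g_2)=\lambda_1(\det(g_1))+\lambda_2(\det(g_2))$ coming from \eqref{isom} and invoking uniqueness of $(\lambda_1,\lambda_2)$ forces $\lambda_1 = \lambda_2 = \lambda^{\prime}$.

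Conversely, if $\lambda_1=\lambda_2$, call the common map $\lambda^{\prime}\colon F^{\times}\to N$. Then $\tilde\lambda := \lambda^{\prime}\circ\det\colon \GL_V(F)\to N$ is a continuous homomorphism whose restriction to $P(F)$ equals $\lambda$ by the same computation, so $b_{\lambda,R}=0$ by Lemma \ref{trivial}. There is no real obstacle here: the only subtlety is invoking uniqueness in the isomorphism \eqref{isom}, which has already been established.
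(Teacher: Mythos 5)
Your proof is correct and follows essentially the same route as the paper, which deduces the corollary directly from Lemma \ref{trivial} together with the factorization of continuous homomorphisms on $\GL_V(F)$ through $\det$ and the uniqueness in \eqref{isom}; you have merely written out the restriction-to-the-Levi comparison that the paper leaves implicit.
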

For a continuous group homomorphism $\lambda\colon F^{\times}\too N$ we define
$$c_{\lambda,R}=b_{(\lambda,0)}\in \Ext^{1}_{R[G],\cont}(R,v_{r}(N))\quad\mbox{and}\quad \mathcal{E}(\lambda)=\mathcal{E}(\lambda,0).$$
If $\lambda_1,\lambda_2\colon F^{\times}\too N$ are two group homomorphism, the corollary above implies that
$$b_{(\lambda_1,\lambda_2),R}=c_{\lambda_1-\lambda_2,R}.$$
The next claim follows immediately.
\begin{Cor}\label{injcor}
The map
$$\Hom(F^{\times},N)\too \Ext^{1}_{R[G],\cont}(R,v_{r}(N)),\quad \lambda\mapstoo c_{\lambda,R}$$
is an injective group homomorphism that is functorial in $R$ and $N$.
\end{Cor}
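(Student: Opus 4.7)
The plan is to deduce Corollary \ref{injcor} directly from the three preceding results without revisiting any of their internal constructions.

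First I would observe that $c_{\lambda,R}$ is by definition the composition
\[
\Hom(F^{\times},N)\too \Hom(P(F),N)\too \Ext^{1}_{R[G],\cont}(R,v_{r}(N)),
\]
where the first arrow is $\lambda\mapsto(\lambda,0)$ (using the canonical identification \eqref{isom}) and the second is $\mu\mapsto b_{\mu,R}$. The first arrow is manifestly a group homomorphism, and it is functorial in $N$; the second arrow is a group homomorphism functorial in $R$ and $N$ by the previous Lemma. Composing, $\lambda\mapsto c_{\lambda,R}$ is a group homomorphism functorial in $R$ and $N$.

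Next, for injectivity, I would invoke the immediately preceding Corollary, which says $b_{\mu,R}=0$ iff the two components $\mu_1,\mu_2$ of $\mu$ agree. Applying this to $\mu=(\lambda,0)$ shows that $c_{\lambda,R}=b_{(\lambda,0),R}=0$ forces $\lambda=0$, i.e.\ the map has trivial kernel.

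There is essentially no obstacle here; the entire content of the statement has been isolated in the earlier lemma, its corollary, and the identification $\Hom(P(F),N)\cong\Hom(F^{\times},N)^{2}$. The only mild point worth spelling out is that the formula $b_{(\lambda_1,\lambda_2),R}=c_{\lambda_1-\lambda_2,R}$ mentioned just above the corollary, while useful as a sanity check (it says the image of $c_{\cdot,R}$ equals that of $b_{\cdot,R}$), is not logically needed for the proof; the argument above proceeds directly.
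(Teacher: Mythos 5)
Your proof is correct and matches the paper's (implicit) argument: the paper offers no proof beyond the remark that the claim ``follows immediately'' from the preceding Lemma, the preceding Corollary, and the identification $\Hom(P(F),N)\cong\Hom(F^{\times},N)^{2}$, and you have simply spelled out exactly that deduction. Your observation that the displayed identity $b_{(\lambda_1,\lambda_2),R}=c_{\lambda_1-\lambda_2,R}$ is a useful consequence rather than a logical prerequisite is also accurate.
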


\subsection{Universality}\label{universal}
It is a natural question to ask whether there exists a class of topological $R$-modules such that the map in Corollary \ref{injcor} is an isomorphism.

In case $R=\Z$ one could reformulate the question as follows:
Let
$$c_{\univ}=c_{\id,\Z}\in \HH^{1}_{\cont}(G,v_{r}(F^{\times}))$$
be the extension associated with the identity $\id\colon F^{\times} \to F^{\times}.$
Functoriality implies that for every continuous homomorphism $\lambda\colon F^{\times}\to N$ the equality
$$\lambda_{\ast}(c_{\univ})=c_{\lambda,\Z} $$
holds in $\HH^{1}_{\cont}(G,v_{r}(F^{\times}))$.
Thus the question is whether $c_{\univ}$ is a universal extension.

We give a partial answer to this question.
Let $\widehat{F}^{\times}$ (resp.~$\widehat{\Z}$) be the profinite completion of $F^{\times}$ (resp.~$\Z$).
We define $\widehat{c}_{\univ}=c_{i,\widehat{\Z}},$ where $i\colon F^{\times}\to \widehat{F}^{\times}$ is the natural inclusion.
Let $N$ be an abelian profinite group.
Continuous homomorphisms from $F^{\times}$ to $N$ can be identified with continuous homomorphisms from $\widehat{F}^{\times}$ to $N$ and, by functoriality, we have
$$\lambda_{\ast}(\widehat{c}_{\univ})=c_{\lambda,\widehat{\Z}}$$
for every continuous homomorphism $\lambda\colon F^{\times }\to N$.
\begin{Def}
Let $N$ be a profinite group.
We say that $N$ is pretty good if $N$ is topologically finitely generated and every prime divisor $l$ of the pro-order of $N$ that is prime to $p$ is bon and banal for $\GL_V(F)$ in the sense of \cite{Dat}, Section 2.1.5.
\end{Def}
\begin{Pro}\label{puniv}
Let $F$ be a $p$-adic field.
For every abelian profinite group $N$ that is pretty good the homomorphism
$$\Hom(F^{\times},N)\too \Ext^{1}_{\widehat{\Z}[G],\cont}(\widehat{\Z},v_{r}(N)),\quad \lambda\mapstoo \lambda_\ast(\widehat{c}_{\univ})$$
is an isomorphism.
\end{Pro}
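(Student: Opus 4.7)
The plan is to prove injectivity directly from earlier material and to establish surjectivity by a devissage argument that reduces to a known computation of $\mathrm{Ext}^{1}$ between Steinberg mod-$\ell$ representations.

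First I would dispense with injectivity. Applying Corollary \ref{injcor} with the coefficient ring $R=\widehat{\Z}$ already shows that $\lambda \mapsto c_{\lambda,\widehat{\Z}}$ is injective on $\Hom(F^{\times},N)$, and by functoriality $c_{\lambda,\widehat{\Z}} = \lambda_{\ast}(\widehat{c}_{\univ})$. So only surjectivity requires work. The strategy is to express both sides as inverse limits of their analogues for finite discrete quotients. Since $N$ is topologically finitely generated and profinite, write $N = \varprojlim_{j} N_{j}$ with each $N_{j}$ a finite abelian quotient and the transition maps surjective. Then $C(\Gr_{r,V}(F),N) = \varprojlim_{j} C(\Gr_{r,V}(F),N_{j})$, hence $v_{r}(N) = \varprojlim_{j} v_{r}(N_{j})$. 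The transition maps in $v_{r}(N_{j})$ are surjective, so Mittag-Leffler gives
$$\Ext^{1}_{\widehat{\Z}[G],\cont}(\widehat{\Z},v_{r}(N)) \cong \varprojlim_{j} \Ext^{1}_{\widehat{\Z}[G],\cont}(\widehat{\Z},v_{r}(N_{j})),$$
and the corresponding identity $\Hom(F^{\times},N) = \varprojlim_{j} \Hom(F^{\times},N_{j})$ is immediate.

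Thus it suffices to treat the case that $N$ is a finite abelian group. Splitting $N$ into $\ell$-primary components and observing that both functors commute with finite direct sums, I reduce further to $N$ an $\ell$-group, where $\ell$ is either equal to $p$ or is a prime-to-$p$ divisor of the pro-order of the original $N$ (hence, by the \emph{pretty good} hypothesis, bon and banal for $\GL_{V}(F)$). Because $N$ is now finite and discrete, every continuous $G$-action on $v_{r}(N)$ is automatically smooth, so
$$\Ext^{1}_{\widehat{\Z}[G],\cont}(\widehat{\Z},v_{r}(N)) \;=\; H^{1}_{\sm}(G,v_{r}(N)).$$
An induction on the length of $N$ using the five-lemma applied to a composition series with $\mathbb{F}_{\ell}$-subquotients, together with the commutative square relating the maps $\lambda \mapsto \lambda_{\ast}(\widehat{c}_{\univ})$ for the successive quotients, reduces the problem to the single case $N = \mathbb{F}_{\ell}$.

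The remaining base case is where the essential input from the cited literature enters: one has to show that the natural map $\Hom(F^{\times},\mathbb{F}_{\ell}) \to H^{1}_{\sm}(G,v_{r}(\mathbb{F}_{\ell}))$ is an isomorphism. For $\ell \neq p$ bon and banal this is exactly the kind of computation of $\Ext^{1}$ between generalized Steinberg representations carried out by Dat in \cite{Dat}; for $\ell = p$ it follows from the analogous cohomological computations of Orlik \cite{Orlik} and Colmez--Dospinescu--Hauseux--Niziol \cite{CDHN}, whose results apply without additional hypothesis on $p$. In each case these references provide a canonical identification of $H^{1}_{\sm}(G, v_{r}(\mathbb{F}_{\ell}))$ with $\Hom(F^{\times},\mathbb{F}_{\ell})$, and by injectivity our candidate map must then be this isomorphism. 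The main obstacle is therefore not conceptual but bookkeeping: one must verify that the extension classes produced by the construction $\mathcal{E}(\lambda)$ realize precisely the basis of $\mathrm{Ext}^{1}$ identified by Dat, Orlik, and CDHN, and that the various formalisms (smooth, continuous, profinite coefficients) match up cleanly when one passes back to the inverse limit over $N_{j}$.
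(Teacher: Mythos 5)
Your outline gets injectivity and the general shape right (reduce to finite coefficients, quote Dat/Orlik/CDHN, conclude by a counting argument against the injectivity of $\lambda\mapsto c_{\lambda}$), but the step that reduces from finite $\ell$-groups to the single case $N=\mathbb{F}_{\ell}$ by induction along a composition series does not work as stated, and that is exactly where the content lies. In the inductive step for $0\to N'\to N\to N''\to 0$, the surjectivity chase requires you to lift the homomorphism $\lambda''\in\Hom(F^{\times},N'')$ produced by the inductive hypothesis to $\Hom(F^{\times},N)$; equivalently you need right-exactness of $\Hom(F^{\times},-)$ on finite $\ell$-groups, or else a compatibility between the lifting obstruction in $\Ext^{1}(F^{\times},N')$ and the connecting map $\HH^{1}_{\sm}(G,v_{r}(N''))\to\HH^{2}_{\sm}(G,v_{r}(N'))$, together with injectivity of the latter on the relevant classes --- none of which the cited references give you. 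For $\ell\neq p$ banal this is harmless, since $\Hom(F^{\times},N)\cong N$ via $\ord_{F}$, so $\Hom(F^{\times},-)$ is exact; but for $\ell=p$ and $\mu_{p}\subseteq F$ (e.g.\ $F=\Q_{p}(\mu_{p})$, $N=\Z/p^{2}\onto N''=\Z/p$) the map $\Hom(F^{\times},\Z/p^{2})\to\Hom(F^{\times},\Z/p)$ is not surjective, and the five-lemma argument breaks down. The paper runs the d\'evissage in the opposite direction precisely to avoid this: for $0\le t\le s$ it maps the left-exact sequence of $\Hom(F^{\times},\Z/l^{\bullet}\Z)$'s into the sequence of $\Ext^{1}$'s coming from $0\to v_{r}(\Z/l^{t}\Z)\to v_{r}(\Z/l^{s}\Z)\to v_{r}(\Z/l^{s-t}\Z)\to 0$ (exactness of the $\Ext$-column uses Lemma \ref{invariants}), and a chase using only the injectivity from Corollary \ref{injcor} reduces everything to the ``top'' case $t=s$; there Dat/Orlik (for $l\neq p$: smooth extensions free of rank one over $\Z/l^{s}\Z$) and CDHN (for $l=p$, for every $s$, not just $s=1$) are quoted, and surjectivity follows from injectivity plus a rank, respectively cardinality, count. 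So either reverse the direction of your induction along these lines, or simply invoke CDHN for all $\Z/p^{s}\Z$, which makes the reduction to $\mathbb{F}_{\ell}$ unnecessary; note also that no ``bookkeeping'' matching your classes $\mathcal{E}(\lambda)$ with the bases in the literature is needed, since the counting argument sidesteps it.

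Two smaller points. Your inverse-limit step needs more than the phrase ``Mittag--Leffler'': to conclude that a class over $N=\varprojlim_{j}N_{j}$ which agrees with $c_{\lambda}$ at every finite level equals $c_{\lambda}$, you must show that $\Ext^{1}_{\widehat{\Z}[G],\cont}(\widehat{\Z},v_{r}(N))\to\varprojlim_{j}\Ext^{1}_{\widehat{\Z}[G],\cont}(\widehat{\Z},v_{r}(N_{j}))$ has trivial kernel, which rests on $v_{r}(N_{j})^{G}=0$ (Lemma \ref{invariants}) and a cochain-level (or $\varprojlim^{1}$) argument, not merely on surjectivity of the transition maps of the $v_{r}(N_{j})$. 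Finally, the attribution in your base case is off: Orlik's computation is used for the primes $l\neq p$ (smooth extensions), while the case $l=p$ rests on Colmez--Dospinescu--Hauseux--Nizio\l{}.
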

\begin{proof}
Let $0\leq t \leq s$ be integers.
It is enough to check that the map
\begin{align}\label{injmap}\Hom(F^{\times},\Z/l^{t} \Z)\too \Ext^{1}_{\Z/l^{s}\Z[\PGL_V(F)]}(\Z/l^{s}\Z,v_{r}(\Z/l^{t}\Z))\end{align}
surjects onto the space of smooth extensions for $l=p$ and every prime $l\neq p$ that is bon and banal.
The exact sequence
$$0 \too \Z/l^{t}\Z \xlongrightarrow{\cdot l^{s-t}}\Z/l^{s}\Z \too \Z/ l^{s-t} \Z \too 0$$
induces the exact sequence
$$0 \too v_r(\Z/l^{t}\Z) \too v_r(\Z/l^{s}\Z) \too v_r(\Z/ l^{s-t} \Z) \too 0.$$
We get the following commutative diagram with exact columns and injective horizontal maps:
\begin{center}
 \begin{tikzpicture}
    \path 	(0,0) node[name=A]{$0$}
		(0,-1) node[name=B]{$\Hom(F^{\times},\Z/l^{t}\Z)$}
		(0,-2.2) node[name=C]{$\Hom(F^{\times},\Z/l^{s}\Z)$}
		(0,-3.4) node[name=D]{$\Hom(F^{\times},\Z/l^{s-t}\Z)$}
		(6,0) node[name=F]{$0$}
		(6,-1) node[name=G]{$\Ext^{1}_{\Z/l^{s}\Z[\PGL_V(F)]}(\Z/l^{s}\Z,v_{r}(\Z/l^{t}\Z))$}
		(6,-2.2) node[name=H]{$\Ext^{1}_{\Z/l^{s}\Z[\PGL_V(F)]}(\Z/l^{s}\Z,v_{r}(\Z/l^{s}\Z))$}
		(6,-3.4) node[name=I]{$\Ext^{1}_{\Z/l^{s}\Z[\PGL_V(F)]}(\Z/l^{s}\Z,v_{r}(\Z/l^{s-t}\Z))$};
    \draw[->] (A) -- (B) ;
		\draw[->] (B) -- (C) ;
		\draw[->] (C) -- (D) ;
		\draw[->] (F) -- (G) ;
		\draw[->] (G) -- (H) ;
		\draw[->] (H) -- (I) ;
    \draw[right hook->] (B) -- (G) ;
    \draw[right hook->] (C) -- (H) ;
    \draw[right hook->] (D) -- (I) ;
  \end{tikzpicture} 
\end{center}
The exactness of the second column follows from Lemma \ref{invariants}.
By a simple diagram chase we see that it is enough to prove the claim above in the case $s=t$.

The case $l\neq p$: By assumption $l$ does not divide the order of the torsion subgroup of $F^{\times}$.
In particular, $\Hom(F^{\times},\Z/l^{s} \Z)$ is a free $\Z/l^{s} \Z$-module of rank $1$.
By \cite{Dat}, Theorem 1.3 respectively \cite{Orlik}, Theorem 1, the space of smooth extension is also free of rank $1$.
Thus, the claim follows from the injectivity of \eqref{injmap}.

The case $l=p$: By \cite{CDHN}, Theorem 1.10 (2), there exists an isomorphism between $\Hom(F^{\times},\Z/p^{s}\Z)$ and the space of continuous extensions.
The claim now follows by injectivity of \eqref{injmap} and the finiteness of $\Hom(F^{\times},\Z/p^{s}\Z)$.
\end{proof}

\begin{Rem}
We expect that Proposition \ref{puniv} also holds in the case that $F$ is a local function field.
More precisely, we expect that the map \eqref{injmap} for $l=p$ is inverse to the one constructed in \cite{CDHN}.
\end{Rem}

\section{Invertible analytic functions on Drinfeld symmetric spaces}\label{2}
Using the main theorem of \cite{Gekeler} we will prove that the group of invertible analytic functions on Drinfeld's upper half space is isomorphic to a dual of the universal extension (for r=n-1) defined in Section \ref{universal}.

\subsection{Zero cycles}
Let us recall that Drinfeld's upper half space $\Omega=\Omega_V$ of dimension $n-1$ is the complement of all $F$-rational hyperplanes in $\PP(V)$, i.e.:
 $$\Omega=\PP(V)\setminus \bigcup_{H\subsetneq V} \PP(H).$$
It is a rigid analytic variety over $F$ on which the group $G=\PGL_V(F)$ acts naturally.
Let $\C_F$ be the completion of an algebraic closure of $F$ with respect to the unique extension of the norm.
Let $\mathcal{Z}_0(\Omega_{\C_F})=\Z[\Omega(\C_F)]$ be the free abelian group on the $\C_F$-valued points of $\Omega$.
We define $\mathcal{Z}_0^0(\Omega_{\C_F})$ as the kernel of the degree map
$$\deg\colon \mathcal{Z}_0(\Omega_{\C_F})\too \Z,\quad \sum_{x}a_x [x] \mapstoo \sum a_x.$$
In the following by an extension $E/F$ we always mean a closed subextension of $\C_F/F$ and we write $\iota_E\colon F \into E$ for the inclusion.
For such an extension $E$ we put
\begin{align*}
\mathcal{Z}_0(\Omega_{E})&=\mathcal{Z}_0(\Omega_{\C_F})^{\Aut(\C_F,E)}\\
\intertext{and}
\mathcal{Z}_0^0(\Omega_{E})&=\mathcal{Z}_0^{0}(\Omega_{\C_F})^{\Aut(\C_F,E)}.
\end{align*}
The support of a zero cycle $z\in \mathcal{Z}_0(\Omega_{E})$ is always defined over a finite extension of $E$.
Let $c_{\geom}(E)\in \Ext^1_{\Z[G]}(\mathcal{Z}_0^0(\Omega_{E}),\Z)$ be the class of the exact sequence
$$0 \too \mathcal{Z}_0^0(\Omega_{E}) \too \mathcal{Z}_0(\Omega_{E})\xlongrightarrow{\deg}\Z\too 0.$$

\subsection{Zero cycles of degree 0 and Steinberg representations}
We put $V^{\ast}=\Hom_F(V,F)$.
Let $z=\sum_x a_x [x]\in \mathcal{Z}_0^0(\Omega_{E})$ be a zero cycle of degree $0$.
There exist lifts $v_x\in V\otimes_F \C_F \setminus\left\{0\right\}$ of the elements $x$ such that the formal sum $\sum_x a_x [v_x]$ is invariant under $\Aut(\C_F,E)$.
By definition of $\Omega$ we have $\ell(v_x)\neq 0$ for all non-zero elements $\ell\in V^{\ast}$.
Since the cycle $z$ is of degree $0$, the function
$$\widetilde{\Psi^0}(z)\colon V^{\ast}\setminus\left\{0\right\}\too E^{\times},\quad \ell\mapstoo \prod_x \ell(v_x)^{a_x}$$
descends to a map
$$\widetilde{\Psi^0}(z)\colon \PP_{V^{\ast}}(F)\too E^{\times}.$$
Choosing different lifts $v_x$ changes $\widetilde{\Psi^0}(z)$ only up to a constant and, therefore, the induced element $\Psi(z)\in v_{1,V^{\ast}}(E^{\times})$ does not depend on the chosen lifts.
The resulting map
$$\Psi^0\colon\mathcal{Z}_0^0(\Omega_{E})\too v_{1,V^{\ast}}(E^{\times}),\quad z\mapstoo \Psi^0(z) $$
is $G$-equivariant.
Here and in the following we always identify $\GL_V$ and $\GL_{V^{\ast}}$ via the $\GL_V$-action on $V^{\ast}$ given by $(g.\ell)(v)=\ell(g^{-1}(v))$.
Given a topological abelian group we also abbreviate $v(N)=v_{1,V^{\ast}}(N)\cong v_{n-1,V}(N).$

\subsection{Zero cycles and the universal extension}
We fix an element $y_0\in\PP_{V^{\ast}}(F)$ and denote by $P\subseteq \GL_{V^{\ast}}\cong \GL_{V}$ its stabilizer.
Let $c_{\univ}$ be the universal extension of $\Z$ by $v(F^{\times})$ associated with the identity $\id\colon F^{\times}\to F^{\times}$ and the parabolic subgroup $P$ as in Section \ref{universal}.

The following is a generalization of \cite{BG2}, Lemma 6.8, from the case $n=2$ to arbitrary dimension.
\begin{Pro}\label{comparison}
For every extension $E/F$ the equality
$$\Psi^0_{\ast}(c_{\geom}(E))=\iota_{E,\ast}(c_{\univ})$$
holds in $\HH^{1}(G,v(E^{\times})).$
\end{Pro}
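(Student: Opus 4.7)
The plan is to construct an explicit $G$-equivariant morphism of short exact sequences realising the equality of extension classes. By functoriality in $N$ of the assignment $\lambda \mapsto c_{\lambda,\Z}$ from Corollary~\ref{injcor}, one has $\iota_{E,\ast}(c_{\univ}) = c_{\iota_E,\Z}$, represented by the exact sequence $0 \to v(E^{\times}) \to \mathcal{E}(\iota_E) \to \Z \to 0$. It therefore suffices to exhibit a $G$-equivariant homomorphism $\Xi\colon \mathcal{Z}_0(\Omega_E) \to \mathcal{E}(\iota_E)$ which is the identity on the quotient $\Z$ and agrees with $\Psi^0$ on the kernel.

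First I would fix a nonzero lift $\ell_0 \in V^{\ast}(F)$ with $[\ell_0] = y_0$. Let $\chi\colon P(F) \to F^{\times}$ denote the character determined by $p.\ell_0 = \chi(p)\,\ell_0$; via the Levi decomposition of $P$ and the identification $\Hom(P(F), F^{\times}) \cong \Hom(F^{\times}, F^{\times})^2$, this $\chi$ corresponds to the pair $(\id, 0)$, so the associated extension in Section~\ref{1} is exactly $\mathcal{E}(\id)$. For a zero cycle $z = \sum_x a_x [x] \in \mathcal{Z}_0(\Omega_E)$, choose lifts $v_x \in (V \otimes_F \C_F)\setminus\{0\}$ so that the formal sum $\sum_x a_x [v_x]$ is $\Aut(\C_F/E)$-invariant, and set
\[
\Phi_z\colon \GL_V(F) \to \C_F^{\times}, \qquad \Phi_z(g) = \prod_x \ell_0(g^{-1} v_x)^{a_x}.
\]
The key step is to verify that $(\overline{\Phi_z}, \deg z)$ defines a well-defined element $\Xi(z) \in \mathcal{E}(\iota_E)$. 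This requires three checks: (i) $\Phi_z$ is in fact valued in $E^{\times}$, the non-vanishing using that each $v_x$ avoids the $F$-rational hyperplane $g\cdot\ker\ell_0$ (by definition of $\Omega$), and the Galois invariance using that $\ell_0, g \in F$ together with invariance of $\sum_x a_x [v_x]$; (ii) the transformation rule $\Phi_z(gp) = \Phi_z(g)\cdot \iota_E(\chi(p))^{\deg z}$, which is immediate from $\ell_0(p^{-1}v) = \chi(p)\,\ell_0(v)$; and (iii) independence of the class $\overline{\Phi_z}$ from the choice of lifts, since rescaling $v_x$ by $\alpha_x \in \C_F^{\times}$ multiplies $\Phi_z$ by the constant $\prod_x \alpha_x^{a_x}$.

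The remaining verifications should be routine. $\Xi$ is additive by multiplicativity of $\Phi$ and additivity of $\deg$, and $G$-equivariant because $(g.\Phi_z)(g') = \Phi_z(g^{-1}g') = \prod_x \ell_0(g'^{-1} g v_x)^{a_x} = \Phi_{g.z}(g')$, where $g.z$ is lifted via $g.v_x$. Compatibility on the right-hand quotient is the identity on $\Z$, and for $z$ of degree zero one recovers $\Xi(z) = \Psi^0(z)$ in $v(E^{\times})$ via the bijection $\GL_V(F)/P(F) \xlongrightarrow{\cong} \Gr_{1,V^{\ast}}(F)$, $g \mapsto [g.\ell_0]$: indeed $\Phi_z(g) = \prod_x (g.\ell_0)(v_x)^{a_x}$ is precisely the evaluation of $\Psi^0(z)$ at $[g.\ell_0]$.

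The main obstacle I anticipate is the careful bookkeeping in the first step, in particular correctly identifying the character $\chi$ through the Levi decomposition and the isomorphism $\GL_V \cong \GL_{V^{\ast}}$ so that the resulting extension is precisely $\mathcal{E}(\iota_E) = \mathcal{E}(\iota_E, 0)$ rather than some twist, and handling the Galois descent from $\C_F$ to $E$ uniformly for arbitrary invariant zero cycles. Everything else is formal: once $\Xi$ is in place, a $G$-equivariant map of short exact sequences witnesses the equality of the corresponding classes in $\HH^1(G, v(E^{\times}))$, yielding the desired identity.
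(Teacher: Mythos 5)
Your proposal is correct and follows essentially the same route as the paper: both construct an explicit $G$-equivariant map $\mathcal{Z}_0(\Omega_E)\to\mathcal{E}(\iota_E)$ (your $\Xi$, the paper's $\Psi$) sending $\sum_x a_x[x]$ to $(\,\overline{\prod_x \ell_0(g^{-1}v_x)^{a_x}},\,\deg z\,)$, identify the resulting $P(F)$-character as $(\id,0)$, and conclude via a morphism of short exact sequences extending $\Psi^0$. The only cosmetic difference is that the paper normalizes each $\Phi_x(g)=(g.\ell_0)(v_x)/\ell_0(v_x)$ so that it is literally independent of the choice of lift $v_x$, whereas you work with $\prod_x \ell_0(g^{-1}v_x)^{a_x}$ and absorb the ambiguity into the quotient by constants; this is an equivalent bookkeeping choice.
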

\begin{proof}
We fix a lift $\ell_0\in V^{\ast}$ of $y_0$.
Under the identification \eqref{isom} we have
\begin{align}\label{mult}p.\ell_0=(\id,1)(p)\cdot \ell_0\end{align}
for all $p\in P(F)$.
For $x\in\Omega(\C_F)$ we choose a lift $v_x\in V_{\C_F}$ and define the function
$$\Phi_x\colon \GL_{V^{\ast}}(F)\too \C_F^{\times},\quad g\mapstoo \frac{(g.\ell_0)(v_x)}{\ell_0(v_x)}.$$
The function is independent of the choices of lifts of $y_0$ and $x$ and, by \eqref{mult}, fulfils
$$\Phi_x(gp)=\Phi_x(g)\cdot (\id,1)(p)$$
for all $g\in \GL_{V^{\ast}}(F)$ and $p\in P(F)$.
We thus get a well-defined map
$$\Psi\colon \mathcal{Z}_0(\Omega_{E})\too \mathcal{E}(\iota_E),\quad \sum_x a_x[x]\mapstoo (\prod_x (\overline{\Phi_x})^{a_x},\sum_x a_x).$$
Note that a priori $\Psi$ takes values in $\mathcal{E}(\iota_{\C_F})$ but one can argue as before that it factors through $\mathcal{E}(\iota_E)$.

For all $g,g^{\prime} \in \GL_{V^{\ast}(F)}$ and all $x\in\Omega(E)$ we have
\begin{align*}
\Psi(g^{\prime}.[x])(g)
=\frac{(g.\ell_0)(g^{\prime}.v_x)}{\ell_0(g^{\prime}.v_x)}
=\frac{(g^{\prime}g.\ell_0)(v_x)}{\ell_0(v_x)} \cdot \frac{\ell_0(v_x)}{(g^{\prime}.\ell_0)(v_x)}
= (g^{\prime}.\Psi(g))(x) \cdot k_{x,g^{\prime}},
\end{align*}
where $k_{x,g^{\prime}}$ is a constant that does not depend on $g$.
Thus, the homomorphism $\Psi$ is $\GL_{V^{\ast}}(F)$-equivariant.

Similarly, for $x, x^{\prime}\in \Omega(E)$ we have
$$\Psi([x]-[x^{\prime}])(g)=\frac{(g.\ell_0)(v_x)}{\ell_0(v_x)} \frac{\ell_0(v_{x^{\prime}})}{(g.\ell_0)(v_{x^{\prime}})}=\frac{(g.\ell_0)(v_x)}{(g.\ell_0)(v_{x^{\prime}})}\cdot k_{x,x^{\prime}}$$
where $k_{x,x^{\prime}}$ is a constant independent of $g$.
Thus, the restriction of $\Psi$ to cycles of degree $0$ agrees with $\Psi^{0}$, i.e., the diagram
\begin{center}
 \begin{tikzpicture}
    \path 	(0.5,0) node[name=A]{$0$}
		(3,0) node[name=B]{$\mathcal{Z}_0^0(\Omega_{E})$}
		(6,0) node[name=C]{$\mathcal{Z}_0(\Omega_{E})$}
		(8.5,0) node[name=D]{$\Z$}
		(10.5,0) node[name=E]{$0$}
		(0.5,-1.5) node[name=F]{$0$}
		(3,-1.5) node[name=G]{$v(E^{\times})$}
		(6,-1.5) node[name=H]{$\mathcal{E}((\iota_E,1))$}
		(8.5,-1.5) node[name=I]{$\Z$}
		(10.5,-1.5) node[name=J]{$0$};
    \draw[->] (A) -- (B) ;
		\draw[->] (B) -- (C) ;
		\draw[->] (C) -- (D) ;
		\draw[->] (D) -- (E) ;
		\draw[->] (F) -- (G) ;
		\draw[->] (G) -- (H) ;
		\draw[->] (H) -- (I) ;
		\draw[->] (I) -- (J) ;
    \draw[->] (B) -- (G) node[midway, right]{$\Psi^0$};
    \draw[->] (C) -- (H) node[midway, right]{$\Psi$};
    \draw[->] (D) -- (I) node[midway, right]{$=$};
  \end{tikzpicture} 
\end{center}
of $\Z[G]$-modules is commutative and, therefore, the claim follows.
\end{proof}

\subsection{Gekeler's theorem}
For an extension $E/F$ we write $\mathcal{A}_E=\mathcal{O}_{\Omega_E}(\Omega_{E})$ for the ring of rigid analytic functions on $\Omega$ that are defined over $E$.
If $N$ is any abelian group, we identify $\Hom_\Z(\mathcal{Z}_0(\Omega_{\C_F}),N)$ with the space $\Maps(\Omega(\C_F),N)$ of all (set theoretic) functions from $\Omega(\C_F)$ to $N$.
Choosing a base point $x^{\prime}\in \Omega(\C_F)$ we also get a $\GL_{V}(F)$-equivariant isomorphism
$$\Hom_\Z(\mathcal{Z}_0^{0}(\Omega_{\C_F}),N)\too \Maps(\Omega(\C_F),N)/N,\quad f\mapstoo f([x]-[x^{\prime}])$$
that is independent of the choice of $x^{\prime}.$
Thus, by taking $\Hom_\Z(\cdot,\C_F^{\times})$ the map $\Psi^{0}$ induces the homomorphism
$$(\Psi^{0})^{\ast}\colon \Hom_\Z(v(\C_F^{\times}),\C_F^{\times}) \too \Maps(\Omega(\C_F),\C_F^{\times})/\C_F^{\times}.$$
Precomposing with the map
$$\integ\colon\Hom_\Z(v(\Z),\Z)\too \Hom_\Z(v(\C_F^{\times}),\C_F^{\times})$$
defined by the integration pairing in Section \ref{Integration} yields the homomorphism
$$\Xi^{0}=(\Psi^{0})^{\ast}\circ\integ\colon \Hom_\Z(v(\Z),\Z) \too \Maps(\Omega(\C_F),\C_F^{\times})/\C_F^{\times}.$$
By an easy argument with Riemann sums (or rather Riemann products) we see that the map takes values in $\mathcal{A}_{\C_F}^{\times}/\C_F^{\times}.$
A rationality argument as before shows that the map $\Xi^{0}$ factors through $\mathcal{A}_{F}^{\times}/F^{\times}$.
For an extension $E/F$ let
$$\Xi^{0}_E\colon \Hom_\Z(v(\Z),\Z) \too\mathcal{A}_E^{\times}/E^{\times}$$
be the induced homomorphism.

\begin{Thm}\label{Gekeler}
For every extension $E/F$ the map
$$\Xi^{0}_E\colon \Hom_\Z(v(\Z),\Z) \too \mathcal{A}_E^{\times}/E^{\times}$$
is a $\GL_V(F)$-equivariant isomorphism.
In particular, the $\GL_V(F)$-module $\mathcal{A}_E^{\times}/E^{\times}$ does not depend on $E$.
\end{Thm}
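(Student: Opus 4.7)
The plan is to deduce the theorem from Gekeler's isomorphism $P \colon \mathcal{A}_{\C_F}^{\times}/\C_F^{\times} \xlongrightarrow{\cong} \Dist_0(\Gr_{n-1,n}(F),\Z)$ recalled in the introduction. Under the tautological identification
$$\Hom_\Z(v(\Z),\Z) = \Dist_0(\Gr_{n-1,n}(F),\Z),$$
the claim amounts to saying that $\Xi^0_E$ realizes the $E$-rational form of $P^{-1}$. I would proceed in two stages: first establish the case $E=\C_F$, then descend by Galois.

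First, I would unwind the definition $\Xi^0_{\C_F} = (\Psi^0)^{\ast} \circ \integ$. For a distribution $\mu \in \Hom_\Z(v(\Z),\Z)$ and a pair of points $x,x' \in \Omega(\C_F)$, one obtains
$$\frac{\Xi^0_{\C_F}(\mu)(x)}{\Xi^0_{\C_F}(\mu)(x')} \;=\; \int_{\ell \in \PP_{V^{\ast}}(F)} \frac{\ell(v_x)}{\ell(v_{x'})} \, d\mu(\ell),$$
where the right-hand side is the multiplicative limit of Riemann products supplied by the integration pairing of Section \ref{Integration}. This is precisely the formula of van der Put/Gekeler that turns a mass-zero distribution into an invertible rigid analytic function modulo constants, and by Gekeler's theorem it represents $P^{-1}(\mu)$. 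Hence $\Xi^0_{\C_F}$ is a $\GL_V(F)$-equivariant isomorphism.

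Second, I would descend to arbitrary $E/F$. The source $\Hom_\Z(v(\Z),\Z)$ carries the trivial action of $\Aut(\C_F/F)$, while the construction of $\Psi^0$ via $\Aut(\C_F/E)$-invariant formal sums of lifts $v_x$ makes $\Xi^0_{\C_F}$ equivariant for $\Aut(\C_F/E)$. Its image therefore lies in $(\mathcal{A}_{\C_F}^{\times}/\C_F^{\times})^{\Aut(\C_F/E)}$. Taking $\Aut(\C_F/E)$-invariants of the short exact sequence
$$1 \too \C_F^{\times} \too \mathcal{A}_{\C_F}^{\times} \too \mathcal{A}_{\C_F}^{\times}/\C_F^{\times} \too 1,$$
together with the vanishing $\HH^1(\Aut(\C_F/E),\C_F^{\times}) = 0$ (Ax--Sen--Tate style, using that $E$ is closed in $\C_F$) and the descent identity $\mathcal{A}_{\C_F}^{\Aut(\C_F/E)} = \mathcal{A}_E$, identifies this invariant subspace with $\mathcal{A}_E^{\times}/E^{\times}$. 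This produces the factorisation $\Xi^0_E$ and yields its bijectivity; the asserted independence of $E$ is then automatic.

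The principal obstacle is the identification of $\Xi^0_{\C_F}$ with $P^{-1}$: both the surjectivity (every invertible analytic function is a multiplicative integral of a mass-zero distribution, up to a scalar) and the genuine analyticity of the Riemann products are invoked as a black box from Gekeler's theorem (or Junger's alternative proof). Injectivity, by contrast, is elementary: a distribution whose associated multiplicative function is constant pairs trivially with enough locally constant test functions on $\Gr_{n-1,n}(F)$ to force it to vanish.
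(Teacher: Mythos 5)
Your first stage is essentially the paper's argument for $E=\C_F$: one identifies $\Xi^0_{\C_F}$ with the inverse of Gekeler's map via the explicit Riemann-product formula and quotes his result (indeed the \emph{proof} of \cite{Gekeler}, Theorem 3.11, since the statement alone does not pin down the formula). The gap is in your second stage. The vanishing $\HH^1(\Aut(\C_F/E),\C_F^{\times})=0$ that you invoke is not an Ax--Sen--Tate statement and is in fact false (whether one takes continuous or abstract cocycles). Already additively, Tate's theorem gives $\HH^1_{\cont}(\Aut(\C_F/E),\C_F)\cong E\neq 0$ for $E/F$ finite; multiplicatively, the cyclotomic character $\chi$ is a $1$-cocycle for the natural action (its values lie in $\Z_p^{\times}\subseteq E^{\times}$, hence are Galois-fixed), and if its class were trivial, an equation $\chi(\sigma)=\sigma(x)/x$ would produce a semilinear isomorphism $\C_F\cong\C_F(1)$, contradicting $\HH^0(\Aut(\C_F/E),\C_F(1))=0$. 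So your descent step does not establish $(\mathcal{A}_{\C_F}^{\times}/\C_F^{\times})^{\Aut(\C_F/E)}=\mathcal{A}_E^{\times}/E^{\times}$, and with it your proof of surjectivity of $\Xi^0_E$ collapses.

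The repair is simpler than what you attempted, and it is what the paper does: the full computation of the invariants is not needed. The discussion preceding the theorem already shows, by the rationality argument for $\Psi^0$, that $\Xi^{0}$ takes values in $\mathcal{A}_F^{\times}/F^{\times}\subseteq \mathcal{A}_E^{\times}/E^{\times}$, and $\Xi^0_E$ is by definition the induced map. The only extra input is that the canonical map $\mathcal{A}_E^{\times}/E^{\times}\too \mathcal{A}_{\C_F}^{\times}/\C_F^{\times}$ is injective, i.e.\ that an element of $\mathcal{A}_E^{\times}$ which becomes constant over $\C_F$ has its constant value in $E$; this uses only the $\HH^0$-level statement $\C_F^{\Aut(\C_F/E)}=E$ (here Ax--Sen--Tate, and the closedness of $E$, are the right references). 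Granting bijectivity of $\Xi^0_{\C_F}$, bijectivity of $\Xi^0_E$ follows formally: injectivity because its composite with an injection is injective, and surjectivity because any $\bar f\in \mathcal{A}_E^{\times}/E^{\times}$ equals $\Xi^0_{\C_F}(\mu)$ in $\mathcal{A}_{\C_F}^{\times}/\C_F^{\times}$ for some $\mu$, whence $\bar f=\Xi^0_E(\mu)$ by the injectivity just cited. This is exactly the paper's two-line reduction before appealing to the proof of \cite{Gekeler}, Theorem 3.11; your a posteriori statement about invariants is true but cannot be obtained by the cohomological argument you give, and is not needed.
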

\begin{proof}
For every extension $E$ the canonical map
$$\mathcal{A}_E^{\times}/E^{\times}\too \mathcal{A}_{\C_F}^{\times}/\C_F^{\times}$$
is injective.
Thus, it is enough to prove that
$$\Xi^{0}\colon \Hom_\Z(v(\Z),\Z) \too \mathcal{A}_{\C_F}^{\times}/\C_F^{\times}$$
is an isomorphism.
But this follows directly from the proof of \cite{Gekeler}, Theorem 3.11.
\end{proof}

\subsection{The main theorem}
Let $E/F$ be an extension.
Applying $\Hom_\Z(\cdot,E^\times)$ to the exact sequence
$$0 \too v(E^{\times})\too \mathcal{E}(\iota_E) \too \Z\too 0$$
induces the exact sequence 
$$ 0 \too E^{\times}\too \Hom_\Z(\mathcal{E}(\iota_E),E^{\times})\too \Hom_\Z(v(E^{\times}),E^{\times})\too 0.$$
Let $\mathcal{E}_{\univ,E}^{\vee}$ be the pullback of this extension
along $\integ$, i.e., we have an exact sequence
$$0 \too E^{\times}\too \mathcal{E}_{\univ,E}^{\vee}\too \Hom_\Z(v(\Z),\Z)\too 0.$$

\begin{Thm}\label{mainthm}
For every extension $E/F$ there is a unique $\PGL_V(F)$-equivariant isomorphism
$$\Xi_E\colon \mathcal{E}_{\univ,E}^{\vee} \too \mathcal{A}_E^{\times}$$
such that the following diagram commutes:
\begin{center}
 \begin{tikzpicture}
    \path 	(10.8,-1.5) node[name=A]{$0$}
		(8.5,-1.5) node[name=B]{$\mathcal{A}_E^{\times}/E^{\times}$}
		(5.5,-1.5) node[name=C]{$\mathcal{A}_E^{\times}$}
		(3,-1.5) node[name=D]{$E^{\times}$}
		(1.3,-1.5) node[name=E]{$0$}
		(10.8,0) node[name=F]{$0$}
		(8.5,0) node[name=G]{$\Hom(v(\Z),\Z)$}
		(5.5,0) node[name=H]{$\mathcal{E}_{\univ,E}^{\vee}$}
		(3,0) node[name=I]{$E^{\times}$}
		(1.3,0) node[name=J]{$0$};
    \draw[->] (B) -- (A) ;
		\draw[->] (C) -- (B) ;
		\draw[->] (D) -- (C) ;
		\draw[->] (E) -- (D) ;
		\draw[->] (G) -- (F) ;
		\draw[->] (H) -- (G) ;
		\draw[->] (I) -- (H) ;
		\draw[->] (J) -- (I) ;
    \draw[->] (G) -- (B) node[midway, right]{$\Xi^{0}_E$};
    \draw[->] (H) -- (C) node[midway, right]{$\Xi_E$};
    \draw[->] (I) -- (D) node[midway, right]{$=$};
  \end{tikzpicture} 
\end{center}

\end{Thm}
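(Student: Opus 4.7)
My plan is to construct $\Xi_E$ explicitly by dualizing the $\PGL_V(F)$-equivariant map $\Psi\colon \mathcal{Z}_0(\Omega_E) \to \mathcal{E}(\iota_E)$ built in the proof of Proposition \ref{comparison}, and then derive all its properties from the construction, with Theorem \ref{Gekeler} and the five-lemma providing the finish.

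First I apply $\Hom_\Z(\cdot, E^\times)$ to $\Psi$ to obtain the $\PGL_V(F)$-equivariant homomorphism
$$\Psi^\ast\colon \Hom_\Z(\mathcal{E}(\iota_E), E^\times) \too \Hom_\Z(\mathcal{Z}_0(\Omega_E), E^\times) = \Maps(\Omega(E), E^\times).$$
By definition $\mathcal{E}_{\univ,E}^\vee$ is the pullback of $\Hom_\Z(\mathcal{E}(\iota_E), E^\times) \to \Hom_\Z(v(E^\times), E^\times)$ along $\integ$, so it carries a canonical projection to $\Hom_\Z(\mathcal{E}(\iota_E), E^\times)$. Post-composing with $\Psi^\ast$ produces the candidate map
$$\Xi_E\colon \mathcal{E}_{\univ,E}^\vee \too \Maps(\Omega(E), E^\times).$$
Using the identification $\Maps(\Omega(E), E^\times)/E^\times \cong \Hom_\Z(\mathcal{Z}_0^0(\Omega_E), E^\times)$ dual to the degree sequence, together with the defining formula $\Xi^0_E = (\Psi^0)^\ast \circ \integ$, a direct chase shows that $\Xi_E$ restricts to the identity on $E^\times$ and reduces modulo $E^\times$ to $\Xi^0_E$. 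Since $\Xi^0_E$ takes values in $\mathcal{A}_E^\times/E^\times$ by Theorem \ref{Gekeler}, the lift $\Xi_E$ must take values in the preimage $\mathcal{A}_E^\times \subset \Maps(\Omega(E), E^\times)$. Commutativity of the diagram then holds by construction, and the five-lemma — with the identity on $E^\times$ on the left and $\Xi^0_E$ on the right — upgrades $\Xi_E$ to a $\PGL_V(F)$-equivariant isomorphism fitting into the diagram.

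The step I expect to be the main obstacle is the uniqueness assertion. Any two lifts of $\Xi^0_E$ that also restrict to the identity on $E^\times$ differ by a $\PGL_V(F)$-equivariant homomorphism $\Hom_\Z(v(\Z), \Z) \to E^\times$ with trivial $G$-action on the target, equivalently by a $G$-invariant homomorphism $\Dist_0(\Gr_{n-1,V}(F), \Z) \to E^\times$. To show such a homomorphism must vanish I would first use double transitivity of $\PGL_V(F)$ on $\Gr_{n-1,V}(F)$ to see that it takes a common value $c \in E^\times$ on every point-mass difference $\delta_x - \delta_y$ with $x \ne y$; the relation $(\delta_x - \delta_y) + (\delta_y - \delta_z) = \delta_x - \delta_z$ then forces $c \cdot c = c$, so $c = 1$. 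Extending the vanishing from the subgroup generated by point-mass differences to all of $\Dist_0(\Gr_{n-1,V}(F), \Z)$ is the delicate point, which I would treat by approximating a general distribution by finer and finer point-mass sums along a cofinal system of clopen partitions and invoking appropriate continuity of the dual homomorphism.
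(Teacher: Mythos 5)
Your construction of $\Xi_E$ coincides with the paper's: the author likewise applies $\Hom_\Z(\cdot,E^\times)$ to the commutative diagram from the proof of Proposition~\ref{comparison} (i.e.\ dualizes $\Psi$), uses the pullback description of $\mathcal{E}_{\univ,E}^{\vee}$ to produce a map into $\Maps(\Omega(E),E^\times)$, observes that it restricts to the identity on $E^\times$ and reduces mod constants to $\Xi^0_E$, deduces that it must take values in $\mathcal{A}_E^\times$, and concludes via Theorem~\ref{Gekeler} (the paper treats only $E=\C_F$ directly and handles general $E$ by the same rationality argument used earlier in the section). Up to that point your proof is the paper's proof.

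The uniqueness step is where you have flagged a genuine gap, and the gap is real: any two maps fitting the diagram differ by a $\PGL_V(F)$-equivariant homomorphism $\Hom_\Z(v(\Z),\Z)\to E^\times$, so uniqueness amounts to showing $\Hom_{\PGL_V(F)}\bigl(\Dist_0(\Gr_{n-1,V}(F),\Z),E^\times\bigr)=0$. Your double-transitivity argument correctly kills the value on all point-mass differences $\delta_x-\delta_y$, but the subgroup these generate is a tiny piece of $\Dist_0$; a general element of $\Dist_0=\Hom_\Z(\St(\Z),\Z)$ is not an integral combination of point masses, and the ``approximation by finer clopen partitions'' you propose cannot be carried out because the homomorphism to $E^\times$ is only a map of abstract abelian groups, with no continuity hypothesis to pass to the limit. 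As written, this does not close the gap. It is worth pointing out that the paper's own proof also does not address uniqueness explicitly — it simply says ``the claim follows from Theorem~\ref{Gekeler}'' — so the uniqueness assertion appears to be left implicit there as well; but that does not rescue your argument, and if you want to prove uniqueness you need a different idea (for instance, showing directly that the $\PGL_V(F)$-coinvariants of $\Dist_0(\Gr_{n-1,V}(F),\Z)$ admit no nonzero homomorphism to $E^\times$), not the continuity-based approximation you sketch.
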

\begin{proof}
As before, we only treat the case $E=\C_F$.
Applying $\Hom_\Z(\cdot,\C_F^{\times})$ to the commutative diagram in the proof of Lemma \ref{comparison} (with $E=\C_F$) yields the following commutative diagram with exact rows:
\begin{center}
 \begin{tikzpicture}
    \path 	(11.8,-1.5) node[name=A]{$0$}
		(9.3,-1.5) node[name=B]{$\Maps(\Omega(\C_F),\C_F^{\times})/\C_F^{\times}$}
		(5.7,-1.5) node[name=C]{$\Maps(\Omega(\C_F),\C_F^{\times})$}
		(3,-1.5) node[name=D]{$\C_F^{\times}$}
		(1.2,-1.5) node[name=E]{$0$}
		(11.8,0) node[name=F]{$0$}
		(9.3,0) node[name=G]{$\Hom_\Z(v,\C_F^{\times})$}
		(5.7,0) node[name=H]{$\Hom_\Z(\mathcal{E}(\iota_{\C_F}),\C_F^{\times})$}
		(3,0) node[name=I]{$\C_F^{\times}$}
		(1.2,0) node[name=J]{$0$};
    \draw[->] (B) -- (A) ;
		\draw[->] (C) -- (B) ;
		\draw[->] (D) -- (C) ;
		\draw[->] (E) -- (D) ;
		\draw[->] (G) -- (F) ;
		\draw[->] (H) -- (G) ;
		\draw[->] (I) -- (H) ;
		\draw[->] (J) -- (I) ;
    \draw[->] (G) -- (B) node[midway, right]{$(\Psi^{0})^{\ast}$};
    \draw[->] (H) -- (C) node[midway, right]{$\Psi^{\ast}$};
    \draw[->] (I) -- (D) node[midway, right]{$=$};
  \end{tikzpicture} 
\end{center}
Thus, by construction there exists a map $\Xi_{\C_F}\colon  \mathcal{E}_{\univ,\C_F}^{\vee} \too \Maps(\Omega(\C_F),\C_F^{\times}).$
We know that modulo constants the map takes values in analytic functions.
Therefore, $\Xi _{\C_F}$ itself takes values in analytic functions, and the claim follows from Theorem \ref{Gekeler}.
\end{proof}

Let us end this section by giving an explicit description of the ``$p$-adic completion'' of $\mathcal{E}_{\univ,F}^{\vee}$ in the case that $F$ is a $p$-adic field:
let $\widetilde{F}^{\times}$ be the torsion-free part of the pro-$p$ completion of $F^{\times}$.
It is a free $\Z_p$-module of rank $[F:\Q_p]+1=d+1.$
Write $i\colon F^{\times}\to \widetilde{F}^{\times}$ for the natural map and $\mathcal{E}(i)$ for the associated extension of $\Z_p$ by $v(\widetilde{F}^{\times}).$
Let $\mathcal{E}_{\univ}^{\vee,p}$ be the pullback of $\Hom_{\Z_p}(\mathcal{E}(i),\widetilde{F}^{\times})$ along
$$\Hom_{\Z_p}(v(\Z_p),\Z_p)\too \Hom_{\Z_p}(v(\widetilde{F}^{\times}),\widetilde{F}^{\times}).$$
Thus, we have an exact sequence
$$0\too \widetilde{F}^{\times}\too \mathcal{E}_{\univ}^{\vee,p} \too \Hom_{\Z_p}(v(\Z_p),\Z_p)\too 0.$$
Choosing a basis $\left\{\lambda_1,\ldots\lambda_{d+1}\right\}$ of the free $\Z_p$-module $\Hom(F^{\times},\Z_p)$ we get an isomorphism
$$\mathcal{E}_{\univ}^{\vee,p}\cong\Hom_{\Z_p}(\mathcal{E}(\lambda_1)\oplus_{v(\Z_p)}\cdots \oplus_{v(\Z_p)}\mathcal{E}(\lambda_{d+1}),\Z_p),$$
where $\mathcal{E}(\lambda_i)$ is the extension of $\Z_p$ by $v(\Z_p)$ associated with $\lambda_i$.

\section{Lifting obstructions of theta cocycles}
The aim of this section is to apply our main result to the study of lifting obstructions of theta cocycles.
In \cite{DV} Darmon and Vonk initiated the theory of rigid meromorphic cocycles, i.e., elements in the cohomology group $\HH^1(\SL_2(\Z[1/p]),\mathcal{M}^{\times})$, where $\mathcal{M}^{\times}$ is the group of invertible meromorphic functions on Drinfeld's $p$-adic upper half plane $\Omega$.
They provide a large supply of classes in the space of theta cocycles, i.e.,  elements of $\HH^1(\SL_2(\Z[1/p]),\mathcal{M}^{\times}/\C_p^{\times})$ (see also \cite{Ge5} and \cite{GMX} for a generalization of the theory to other number fields and congruence subgroups).
It is thus a natural question to ask whether these classes can be lifted to genuine meromorphic cocycles.

In the following we want to show that for rigid analytic theta cocycles, i.e., classes in $\HH^1(\PGL_2(\Z[1/p]),\mathcal{A}_{\Q_p}^{\times}/\Q_p^{\times})$ the answer is often negative.
But one may still lift these classes to elements in $\HH^1(\SL_2(\Z[1/p]),\mathcal{A}_{\Q_p}^{\times}/\Lambda)$, where $\Lambda \subseteq \Q_p^{\times}$ is a discrete subgroup that can be computed in terms of Galois representations.
In fact, we give general results for cuspidal analytic theta cocycles for Hilbert modular groups.
In addition, we explain that our methods also yield a new proof of a recent result of Darmon--Pozzi--Vonk on the Dedekind--Rademacher cocycle (cf.~\cite{DPV2}, Theorem A).
We end this note by a generalization of the whole story to higher rank unitary groups.

We will use the following notation throughout this section:
If $M$ is an abelian group, we denote its $\Z$-dual by $M^\ast=\Hom_\Z(M,\Z)$.

\subsection{$\LI$-invariants of Galois representations}
Let $F$ be a $p$-adic field with absolute Galois group $\G_F$.
Let $\rho\colon \G_F\to \GL_2(\Q_p)$ be a Galois representation that is an extension of $\Q_p$ by $\Q_p(1)$, i.e., it defines a class $[\rho]$ in $\HH^{1}(\G_F,\Q_p(1)).$
Local class field theory gives an isomorphism
$$\Hom(F^{\times},\Q_p)\cong\HH^{1}(\G_F,\Q_p).$$
We define the $\LI$-invariant
$$\LI(\rho)\subseteq \Hom(F^{\times},\Q_p)$$
of $\rho$ as the orthogonal complement of $[\rho]$ under the local Tate pairing
$$\HH^{1}(\G_F,\Q_p) \times \HH^{1}(\G_F,\Q_p(1))\xlongrightarrow{\cup} \HH^{2}(\G_F,\Q_p(1))\cong \Q_p. $$
Since the pairing is non-degenerate, $\LI(\rho)$ is a subspace of codimension at most one.
Its codimension is one if and only if $\rho$ is non-split.

Suppose the $p$-adic valuation $\ord_F\colon F^{\times}\too \Z$ is not an element of $\LI(\rho)$.
Equivalently, $\rho$ is not crystalline.
Then the subgroup
$$\Lambda_\rho=\left\{q\in F^{\times}\mid \lambda(q)=0\ \forall \lambda \in \LI(\rho)\right\}\subseteq F^{\times}$$
is discrete.
Moreover, it is a finitely generated abelian group of rank one.

\subsection{Cuspidal theta cocycles for Hilbert modular groups}
Let $F$ be a totally real field of degree $d$ and with ring of integers $\mathcal{O}_F$.
We assume that $F$ has narrow class number one.
(One can drop this assumption by formulating everything in the adelic language as in \cite{Sp} or \cite{Ge2}.
For ease of exposition we stick to the case of narrow class number one.)
We fix a prime $\p$ of $F$ lying above the rational prime $p$.
We write $F_\p$ for the completion of $F$ at $\p$,  $\mathcal{O}_\p$ for the valuation ring of $F_\p$ with local uniformizer $\varpi$ and $q$ for the cardinality of the residue field of $\mathcal{O}_F$.
We put
\begin{align*}
G_\p&=\PGL_2(F_\p),\quad K_\p=\PGL_2(\mathcal{O}_\p)\\
\intertext{and}
I_\p&=\left\{ k\in K_\p \mid k\ \mbox{upper triangular}\ \bmod \p \right\}
\end{align*}
The matrix
$$w=\begin{pmatrix} 0 & 1 \\ \varpi & 0 \end{pmatrix}$$
normalizes the Iwahori subgroup $I_\p$.
We define $\widetilde{I}_\p$ to be the subgroup generated by $I_\p$ and $w$ and let
$$\chi_\p\colon \widetilde{I}_\p \too \{\pm 1\}$$
be the unique non-trivial homomorphism that is trivial on $I_\p$.

Let $f$ be a cuspidal Hilbert newform with trivial nebentypus and parallel weight $2$.
We assume that $f$ is Steinberg at $\p$, i.e., $\p$ divides the level of $f$ exactly once and $U_\p f= f.$
For simplicity, we assume that all Hecke eigenvalues of $f$ are rational.
In that case one can attach to $f$ an elliptic curve $E_f/F$ that has split multiplicative reduction at $\p$.
Let $\rho_f\colon \G_F \to \GL_2(\Q_p)$ be the associated Galois representation.
By Tate's $p$-adic uniformization theorem, the restriction $\rho_{f,\p}$ of $\rho_f$ to a decomposition group at $\p$ is a non-crystalline extension of $\Q_p$ by $\Q_p(1)$.
Thus, we can define the discrete subgroup $\Lambda_{\rho_{f,\p}}\subseteq F_\p^{\times}.$

We put $\St(\Z)=C(\PP^{1}(F_\p),\Z)/\Z$ and for any non-zero ideal $\n$ of $\mathcal{O}_F$ we define
\begin{align*}
\Gamma_0(\n)&=\left\{\gamma\in \PGL_2(\mathcal{O}_F)^{+}\mid \gamma \mbox{ is upper triangular modulo } \n\right\},\\
\intertext{respectively}
\Gamma_0(\n)^{\p}&=\left\{\gamma\in \PGL_2(\mathcal{O}_F[1/\p])^{+}\mid \gamma \mbox{ is upper triangular modulo } \n\right\}
\end{align*}
where the superscript $+$ denotes matrices with totally positive determinant.

Suppose that $\p$ divides $\n$ exactly once.
Then, by \cite{Sp}, Proposition 5.8 (b), the map
$$\HH^{\ast}(\Gamma_0(\n)^{\p}, \St(\Z)^\ast)\otimes \Q \too \HH^{\ast}(\Gamma_0(\n),\Q)$$
given by evaluation at a non-zero Iwahori-fixed vector induces an isomorphism on $f$-isotypic components for the Hecke algebra away from $\p$.
In particular, the $f$-isotypic component vanishes for $i\neq d$ and is non-zero for $i=d$ and large enough level.
Let us give a sketch of the proof:
first let us remind ourselves of the definition of a (compactly) induced module.
Given a group $V$, a subgroup $U\subseteq V$ and an $\Z[U]$-module $A$ the induction $\cind_{U}^{V}A$ of $A$ to $V$ is the space
all functions $f\colon V \to A$ such that
\begin{itemize}
\item $f(vu)=u^{-1}f(v)$ for all $u\in U$, $v\in V$ and
\item $f$ has finite support modulo $U$.
\end{itemize}
the group $V$ acts on $\cind_{U}^{V}A$ via left translation.
The $I_\p$-invariants of $\St_\p(\Z)$ are a free $\Z$-module of rank $1$ that generate $\St_\p$ as a $G_\p$-module.
The matrix $w$ acts on it by $\chi_\p$.
Hence Frobenius reciprocity induces a surjective $G_\p$-equivariant homomorphism
$$\cind_{\widetilde{I}_\p}^{G_\p}\chi_\p \ontoo \St_\p(\Z).$$
In fact, this surjection fits into a short exact sequence
\begin{align}\label{Steinbergresolution}
0 \too \cind_{K_\p}^{G_\p}\chi_\p \too \cind_{\widetilde{I}_\p}^{G_\p}\chi_\p \too \St_\p(\Z) \too 0
\end{align}
of $G_\p$-modules that identifies the Steinberg representation with the first cohomology with compact support of the Bruhat--Tits tree (see for example \cite{Sp}, equation (18)).
Utilizing Shapiro's Lemma the short exact sequence above induces a long exact sequence in cohomology of the form
$$ \ldots\to \HH^{i}(\Gamma_0(\n)^{\p}, \St(\Z)^\ast)\to \HH^{i}(\Gamma_0(\n),\Z)^{W_\p=-1} \to \HH^{i}(\Gamma_0(\n\p^{-1}),\Z)\to \ldots$$
where $W_\p$ denotes the Atkin--Lehner operator at $\p$.
Since $\p$ divides the conductor of $f$, we see that
$$(\HH^{i}(\Gamma_0(\n\p^{-1})\otimes \Q)^f=0,$$
where the superscript $f$ denotes taking the $f$-isotypic component.
Thus, the claim follows.
Let us mention that the above argument also yields the following:
if $g$ is a cuspidal Hecke eigenform that is not Steinberg at $p$, then we have
$$(\HH^{i}(\Gamma_0(\n)^{\p}, \St(\Z)^\ast)\otimes \Q)^g=0$$
for all $i\geq 0$.

Let $\mathcal{A}$ denote the ring of analytic functions on Drinfeld's $p$-adic upper half plane over $F_\p$.
By Theorem \ref {Gekeler}, which in this case is due to van der Put (see \cite{vdP}, Proposition 1.1), we have
$$(\HH^{i}(\Gamma_0(\n)^{\p}, \mathcal{A}^{\times}/F_\p^{\times})\otimes \Q)^f =0\ \mbox{for}\ i\neq d $$
Moreover, we have
$$(\HH^{d}(\Gamma_0(\n)^{\p}, \mathcal{A}^{\times}/F_\p^{\times})\otimes \Q)^f \neq 0$$
if the level $\n$ is large enough. 

By our main theorem the following diagram is commutative:
\begin{center}
 \begin{tikzpicture}
    \path 	
		(9.3,-1.5) node[name=G]{$(\HH^{d+1}(\Gamma_0(\n)^{\p}, F_\p^{\times})\otimes \Q)^f$}
		(3,-1.5) node[name=H]{$(\HH^{d}(\Gamma_0(\n)^{\p}, \mathcal{A}^{\times}/F_\p^{\times})\otimes \Q)^f$}
		(9.3,-3) node[name=B]{$(\HH^{d+1}(\Gamma_0(\n)^{\p}, F_\p^{\times})\otimes \Q)^f$}
		(3,-3) node[name=C]{$(\HH^{d}(\Gamma_0(\n)^{\p}, \St(\Z)^\ast)\otimes \Q)^f$};
		\draw[->] (C) -- (B) ;
		\draw[->] (H) -- (G) ;
    \draw[->] (G) -- (B) node[midway, right]{$=$};
    \draw[->] (H) -- (C) node[midway, left]{$\cong$};
  \end{tikzpicture} 
\end{center}
where the horizontal maps are the boundary maps induced by the short exact sequences
$$0 \too F_\p^{\times} \too \mathcal{A}^{\times} \too \mathcal{A}^{\times}/F_\p^{\times} \too 0$$
and
$$0 \too  F_\p^{\times} \too \mathcal{E}_{\univ,F_\p}^{\vee} \too \St(\Z)^\ast \too 0.$$
By \cite{Sp}, Lemma 6.2 (b), the composition of the lower horizontal map with the homomorphism
$$\HH^{d+1}(\Gamma_0(\n)^{\p}, F_\p^{\times})\otimes \Q)^f\xlongrightarrow{\ord_\p} \HH^{d+1}(\Gamma_0(\n)^{\p}, \Z)\otimes \Q)^f$$
induced by the $p$-adic valuation is an isomorphism.
This should be seen as the automorphic analogue of the fact that the local Galois representation $\rho_{f,\p}$ is non-crystalline.
Again we give a short sketch of the proof:
The map under consideration
\begin{align}\label{ord}
\HH^{d}(\Gamma_0(\n)^{\p}, \St(\Z)^\ast)\otimes \Q)^f\too \HH^{d+1}(\Gamma_0(\n)^{\p}, \Z)\otimes \Q)^f
\end{align}
is the boundary map of the long exact sequence induced by the short exact sequence
\begin{align}\label{ordext}
0 \too  \Z \too \mathcal{E}(\ord_\p)^\ast \too \St(\Z)^\ast\too 0
\end{align}
attached to the homomorphism $\ord_\p\colon F_\p^\times \to \Z$.
By \cite{Sp}, Lemma 3.11 (c), 
there exists an exact sequence of $\PGL_2(F_\p)$-modules of the form
\begin{align}\label{Eisensteinpart}
0\to\cind_{K_\p}^{G_\p}\xlongrightarrow{T_\p-q-1}\cind_{K_\p}^{G_\p}\too \mathcal{E}(\ord_\p)\to 0.
\end{align}
As before, we have
$$\HH^{i}(\Gamma_0(\n\p^{-1}),\Q)^f=0$$
for all $i\geq 0$.
Thus, via the long exact sequence induced by the short exact sequence above we also see that
$$\HH^{i}(\Gamma_0(\n)^{\p},\mathcal{E}(\ord_\p)^\ast) \otimes \Q)^f  =0$$
holds for all $i$, which proves that \eqref{ord} is an isomorphism.

Let  $\widehat{\Gamma}_0(\n)^{\p}\subseteq \PGL_2(\mathcal{O}_F[1/\p])$ be the subgroup given by the same congruence conditions as $\Gamma_0(\n)^{\p}$ but without the positivity condition.
The quotient group
$$\Sigma=\widehat{\Gamma}_0(\n)^{\p}/\Gamma_0(\n)^{\p}\cong\{\pm 1\}^{d}$$
naturally acts on all the cohomology groups considered above.
For any character $\epsilon\colon \Sigma\to \{\pm 1\}$ the $\epsilon$-isotypic component
$$(\HH^{d}(\Gamma_0(\n)^{\p}, \St(\Z)^\ast)\otimes \Q)^{f,\epsilon}$$
is an irreducible module over the Hecke algebra away from $\p$.
Hence, the above result by Spie\ss~implies that there exists a discrete subgroup 
$$\Lambda_{f,\p}^{\epsilon}\subseteq F_\p^\times$$
of rank one, which is unique up to homothety, such that the image of the $\epsilon$-component under the lower horizontal map in the commutative diagram above is equal to
$$(\HH^{d+1}(\Gamma_0(\n)^{\p}, \Lambda_{f,\p})\otimes \Q)^{f,\epsilon}.$$
The equality of automorphic and Fontaine--Mazur $\LI$-invariants, which in this case is Theorem 4.1 of \cite{GeR} respectively Theorem 3.7 of \cite{Sp3},
implies that we may take
$$\Lambda_{f,\p}^\epsilon=\Lambda_{\rho_{f},\p}.$$
Combining all the results above we have proven the following:
\begin{Pro}
The canonical map
$$(\HH^{d}(\Gamma_0(\n)^{\p}, \mathcal{A}^{\times}/\Lambda_{\rho_{f,\p}})\otimes \Q)^f \too (\HH^{d}(\Gamma_0(\n)^{\p}, \mathcal{A}^{\times}/F_\p^{\times})\otimes \Q)^f$$
is an isomorphism.
Moreover, we have
$$(\HH^{d}(\Gamma_0(\n)^{\p}, \mathcal{A}^{\times})\otimes \Q)^f=0.$$
\end{Pro}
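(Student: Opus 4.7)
I propose to combine the long exact cohomology sequences of the two short exact sequences
\begin{align*}
(S_1)&\colon\ 0\too F_\p^\times\too \mathcal{A}^\times\too \mathcal{A}^\times/F_\p^\times\too 0, \\
(S_2)&\colon\ 0\too F_\p^\times/\Lambda_{\rho_{f,\p}}\too \mathcal{A}^\times/\Lambda_{\rho_{f,\p}}\too \mathcal{A}^\times/F_\p^\times\too 0
\end{align*}
with all the inputs already assembled in the preceding paragraphs. The single auxiliary statement that I will need is the vanishing
$$(\star)\qquad \HH^{d}(\Gamma_0(\n)^\p, M)^f\otimes \Q = 0 \qquad\text{for every trivial }\Gamma_0(\n)^\p\text{-module }M.$$
To establish $(\star)$, I first observe that the long exact sequence attached to $0\to \Z\to \mathcal{E}(\ord_\p)^\ast\to \St(\Z)^\ast\to 0$, combined with the vanishing $\HH^i(\Gamma_0(\n)^\p, \mathcal{E}(\ord_\p)^\ast)^f\otimes\Q = 0$ for all $i$ (established above via \eqref{Eisensteinpart}) and the Steinberg vanishing $\HH^{d-1}(\Gamma_0(\n)^\p, \St(\Z)^\ast)^f\otimes\Q = 0$, forces $\HH^{d}(\Gamma_0(\n)^\p, \Z)^f\otimes\Q = 0$, hence $\HH^{d}(\Gamma_0(\n)^\p, \Q)^f = 0$. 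Since $\Gamma_0(\n)^\p$ is virtually of type FL, cohomology with $\Q$-coefficients satisfies $\HH^{d}(\Gamma_0(\n)^\p, V) = \HH^{d}(\Gamma_0(\n)^\p, \Q)\otimes_\Q V$ for any $\Q$-vector space $V$ with trivial action, so $\HH^{d}(\Gamma_0(\n)^\p, M)^f\otimes \Q = \HH^{d}(\Gamma_0(\n)^\p, \Q)^f\otimes_\Q (M\otimes \Q) = 0$.

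\textbf{Conclusion.} Granting $(\star)$, the vanishing $\HH^{d}(\Gamma_0(\n)^\p, \mathcal{A}^\times)^f\otimes\Q = 0$ follows from $(S_1)$: the boundary map $\partial_1$ is injective on the $f$-isotypic part after $\otimes\Q$ because $\ord_\p\circ\partial_1$ is an isomorphism (\cite{Sp}, Lemma 6.2\,(b)), so in the long exact sequence the preceding arrow vanishes and $\HH^{d}(\Gamma_0(\n)^\p, \mathcal{A}^\times)^f\otimes\Q$ is a quotient of $\HH^{d}(\Gamma_0(\n)^\p, F_\p^\times)^f\otimes\Q = 0$. For the isomorphism, the canonical surjection $(S_1)\onto (S_2)$ (identity on the quotient, reduction on the kernel) identifies the boundary $\partial_2$ of $(S_2)$ with the composition of $\partial_1$ followed by the projection $\HH^{d+1}(\Gamma_0(\n)^\p, F_\p^\times)\too\HH^{d+1}(\Gamma_0(\n)^\p, F_\p^\times/\Lambda_{\rho_{f,\p}})$. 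The equality of automorphic and Fontaine--Mazur $\LI$-invariants (\cite{GeR}, Theorem 4.1 and \cite{Sp3}, Theorem 3.7) --- already invoked in the text --- places the image of $\partial_1$ on each $\epsilon$-isotypic component inside $\HH^{d+1}(\Gamma_0(\n)^\p, \Lambda_{\rho_{f,\p}})^{f,\epsilon}$, so $\partial_2$ vanishes on the $f$-isotypic part, which proves surjectivity of the canonical map. Injectivity is then another application of $(\star)$ to $M = F_\p^\times/\Lambda_{\rho_{f,\p}}$.

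\textbf{Main obstacle.} The only non-formal step is the preliminary vanishing $(\star)$, and within it the bootstrap from $\Z$-coefficients to arbitrary trivial coefficients via the type-FL property of $\Gamma_0(\n)^\p$; all other ingredients --- the Eisenstein vanishing for $\mathcal{E}(\ord_\p)^\ast$, the injectivity of $\partial_1$ due to Spie{\ss}, and the $\LI$-invariant identification of its image --- have already been supplied in the preceding discussion.
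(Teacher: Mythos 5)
Your proof is correct and follows essentially the same route as the one the paper intends: you reuse the concentration of $(\HH^{i}(\Gamma_0(\n)^{\p},\St(\Z)^{\ast})\otimes\Q)^{f}$ in degree $d$, Spie{\ss}'s $\ord_{\p}$-isomorphism (transported to $\mathcal{A}^{\times}$ via Theorem~\ref{mainthm}), and the identification of automorphic and Fontaine--Mazur $\LI$-invariants, and then do the diagram chase with the long exact sequences of $(S_1)$ and $(S_2)$. The one place where you add something beyond what the paper spells out is the auxiliary vanishing $(\star)$ for arbitrary trivial coefficients; you derive the case $M=\Z$ from the LES of \eqref{ordext} together with the vanishings $\HH^{i}(\Gamma_0(\n)^{\p},\mathcal{E}(\ord_\p)^{\ast})^{f}\otimes\Q=0$ and $\HH^{d-1}(\Gamma_0(\n)^{\p},\St(\Z)^{\ast})^{f}\otimes\Q=0$, and then bootstrap via the FL property. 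This bootstrap is a little delicate to state cleanly (strictly speaking $\Gamma_0(\n)^{\p}$ is only virtually FL, so the universal-coefficients manipulation should be run on a finite-index FL subgroup and descended by a transfer argument, with care that the Hecke action transfers), but the conclusion is right and this finiteness input is indeed implicit in the paper's appeal to \cite{Sp}. The observation that the exactness of the long exact sequence attached to $0\to\Lambda_{\rho_{f,\p}}\to F_\p^{\times}\to F_\p^{\times}/\Lambda_{\rho_{f,\p}}\to 0$ kills $\partial_2$ once the image of $\partial_1$ lands in the image of $\HH^{d+1}(\Lambda_{\rho_{f,\p}})$ is exactly the right formal mechanism and is a nice way to organize what the paper leaves to the reader.
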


\begin{Rem}
The same reasoning implies the result for forms of parallel weight $2$ on inner twists of $\PGL_2(F)$ that are split at $\p$.

In case $F=\Q$ the equality of automorphic and Fontaine--Mazur $\LI$-invariants was already known by work of Darmon (cf.~\cite{D}, Theorem 1) if $f$ corresponds to an elliptic curve, and in general by work of Dasgupta (cf.\cite{Das}, Proposition 5.20).
\end{Rem}

\subsection{An accidental $\LI$-invariant}
In the previous section we always assumed that the Hecke eigenform under consideration is cuspidal.
In this section we treat the Eisenstein case for $F=\Q.$
We abbreviate $\Gamma^p=\PGL_2(\Z[1/p])$.

Let $E_2(p)$ be the Eisenstein series of weight $2$ for the congruence subgroup $\Gamma_0(p)\subseteq \PGL_2(\Z)$.
The $p$-adic Galois representation $\rho_{E_2(p)}$ attached to $E_2(p)$ is the direct sum $\Q_p\oplus \Q_p(1)$ and, thus, we have
$$\Lambda_{\rho_{E_2(p),p}}=\left\{0\right\}\subseteq \Q_p^\times.$$
Therefore we expect that there should be a class in $\HH^1(\Gamma^p,\mathcal{A}^\times)$ attached to $E_2(p)$.
But it turns out that the non-existence of the Eisenstein series of weight $2$ and level $1$ implies that no such class exists.
We will sketch in the following how our results together with the strategy of \cite{GeR} gives a class
$$J_{DR}\in \HH^1(\Gamma^p,\mathcal{A}^\times/ p^\Z)\otimes \Q.$$
This class was recently constructed by Darmon--Pozzi--Vonk using Siegel units (see \cite{DPV2}, Theorem A).

Clearly, we have
\begin{align*}
\HH^{0}(\PGL_2(\Z),\Z)=\Z,\ \HH^0(\Gamma_0(p),\Z)^{W_p=-1}=0
\ \mbox{and}\ 
\HH^1(\PGL_2(\Z),\Z)=0.
\end{align*}
Therefore, the long exact sequence associated with \eqref{Steinbergresolution} induces the short exact sequence
$$0\too \HH^{0}(\PGL_2(\Z),\Z)\too \HH^{1}(\Gamma^p, \St_p(\Z)^\ast)\too \HH^1(\Gamma_0(p),\Z)^{W_p=-1}\too 0.$$
We define $J_{triv}\in \HH^{1}(\Gamma^p, \St_p(\Z)^\ast,\Z)$ to be the image of the canonical generator under the left arrow. 

Furthermore, we see that
$$\HH^{i}(\Gamma^p, \St_p(\Z)^\ast)=0\quad \forall i\neq 1.$$
Next, by analyzing the long exact sequence coming from \eqref{Eisensteinpart} we deduce that
$$\HH^{0}(\Gamma^p, \mathcal{E}(\ord_p)^\ast)=\HH^{1}(\Gamma^p, \mathcal{E}(\ord_p)^\ast)=\Z$$
and
$$\HH^{i}(\Gamma^p, \mathcal{E}(\ord_p)^\ast)=0\quad \forall i\neq 0,1.$$

\begin{Rem}
Note that only the cohomology of arithmetic groups in degree $0$ - or in other words the trivial representation - contributes to the cohomology of $\mathcal{E}(\ord_p)^\ast$.
If the Eisenstein series of weight $2$ and level $1$ would exist, then it would also contribute to the cohomology of $ \mathcal{E}(\ord_p)^\ast$.

One can enlarge $\HH^{1}(\Gamma^p, \mathcal{E}(\ord_p)^\ast)$ by raising the level away from $p$.
This process is often called smoothing (see for example the constructions of Darmon--Dasgupta in \cite{DD}).
\end{Rem}

It is well-known that
$$\HH^{1}(\Gamma^p,\Z)=0.$$
Thus the long exact sequence associated with \eqref{ordext} induces the short exact sequence
$$0\too \HH^{1}(\Gamma^p, \mathcal{E}(\ord_p)^\ast)\too \HH^{1}(\Gamma^p, \St_p(\Z)^\ast)\too \HH^2(\Gamma^p,\Z)\too 0.$$
By the discussion in Section 3.4 of \cite{Sp} the exact sequences \eqref{Steinbergresolution}, \eqref{ordext} and \eqref{Eisensteinpart} all fit together nicely into the following big commutative diagram with exact rows and columns:
\begin{center}
 \begin{tikzpicture}
    \path 	
		(0,0) node[name=A]{$0$}
		(2,0) node[name=B]{$0$}
		(-3.7,-1) node[name=C]{$0$}
		(-2.2,-1) node[name=D]{$\cind_{K_p}^{G_p}\Z$}
		(0,-1) node[name=E]{$\cind_{\widetilde{I}_p}^{G_p}\chi_p$}
		(2,-1) node[name=F]{$\St_p(\Z)$}
		(3.5,-1) node[name=G]{$0$}
		(-3.7,-2.2)node[name=H]{$0$}
		(-2.2,-2.2) node[name=I]{$\cind_{K_p}^{G_p}\Z$}
		(0,-2.2) node[name=J]{$\cind_{K_p}^{G_p}\Z$}
		(2,-2.2) node[name=K]{$\mathcal{E}(\ord_p)$}
		(3.5,-2.2) node[name=L]{$0$}
		(0,-3.4) node[name=M]{$\Z$}
		(2,-3.4) node[name=N]{$\Z$}
		(0,-4.3) node[name=O]{$0$}
		(2,-4.3) node[name=P]{$0$}
	   ;
		
		\draw[->] (A) -- (E) ;
    \draw[->] (B) -- (F) ;
		
		\draw[->] (C) -- (D) ;
		\draw[->] (D) -- (E) ;
		\draw[->] (E) -- (F) ;
    \draw[->] (F) -- (G) ;
		
		\draw[->] (D) -- (I) node[midway,right]{$=$};
		\draw[->] (E) -- (J) ;
    \draw[->] (F) -- (K) ;
		
		\draw[->] (H) -- (I) ;
		\draw[->] (I) -- (J) ;
		\draw[->] (J) -- (K) ;
    \draw[->] (K) -- (L) ;
		
		\draw[->] (M) -- (N) node[midway,above]{$=$};
		\draw[->] (J) -- (M) ;
    \draw[->] (K) -- (N) ;
		
		\draw[->] (M) -- (O) ;
    \draw[->] (N) -- (P) ;
  \end{tikzpicture} 
\end{center}

This immediately implies the image of the generator of $\HH^{1}(\Gamma^p, \mathcal{E}(\ord_p)^\ast)$ under the map
$$\HH^{1}(\Gamma^p, \mathcal{E}(\ord_p)^\ast)\too \HH^{1}(\Gamma^p, \St_p(\Z)^\ast)$$
is equal to $J_{triv}$.
Thus, we get canonical isomorphisms
$$\HH^{1}(\Gamma_0(p),\Z)\xlongleftarrow{\cong}\HH^{1}(\Gamma^p, \St_p(\Z)^\ast)/<J_{triv}>\xlongrightarrow{\cong} \HH^2(\Gamma^{p},\Z).$$

Remember that van der Put's theorem gives a canonical isomorphism
$$\HH^{1}(\Gamma^p, \St_p(\Z)^\ast)=\HH^{1}(\Gamma^p, \mathcal{A}^\times/\Q_p^\times).$$
We denote the image of $J_{triv}$ under this isomorphism also by $J_{triv}.$
The Eisenstein series $E_2(p)$ defines a class in $\HH^{1}(\Gamma_0(p),\Z)$ and, thus, also a class
$$J_{DR}\in \HH^{1}(\Gamma^p, \mathcal{A}^\times/\Q_p^\times)$$
that is unique up to powers of $J_{triv}.$
Arguing as in the previous section we see that this class cannot be lifted to a class in $\HH^{1}(\Gamma^p, \mathcal{A}^\times)$.
But there exists a discrete subgroup
$$\Lambda_{E_2(p),p}\subseteq \Q_p^\times$$
of rank one such that $J_{DR}$ can be lifted to a class in $\HH^{1}(\Gamma^p, \mathcal{A}^\times/\Lambda_{E_2(p),p}).$
Thus, we have to show that this discrete subgroup is homothetic to the one generated by $p$.

Although Theorem 3.16 of \cite{GeR} is stated only for cusp forms it is enough to assume that the system of eigenvalues for the full Hecke algebra (including the Hecke operator at $p$) shows up only in a single degree in cohomology.
This is certainly the case for $E_2(p).$
Thus one can argue as in \textit{loc.cit.~}to show that $\Lambda_{E_2(p),p}$ is homothetic to the lattice generated by any element $q\in \Q_\p^\times$ such that
\begin{itemize}
\item $\ord_p(q)\neq 1$ and 
\item $\log_p(q)=-2 \left.\frac{d\alpha}{dk}\hspace{-0.2em}\mid_{k=2}\right.$
\end{itemize}
where $\alpha$ is the the $U_p$-eigenvalue of the Hida family passing through $E_2(p)$.
As the $U_p$-eigenvalue in the Eisenstein family is constant (and equal to $1$) we see that $q=p$ fulfils the two properties above.

\subsection{Theta cocycles for definite unitary groups}
We end this note with some remarks about theta cocycles for definite unitary groups.
The author hopes to return to the study of these cocycles and their arithmetic applications in the future.
Let $GU$ be the group of unitary similitudes of a definite hermitian form over a totally real number field $F$ and let $G$ be $GU$ modulo its centre.
Assume that $G$ is split at a prime $\p$ of $F$, i.e. $G(F_\p)\cong \PGL_n(F_\p).$

Let $\pi$ be an automorphic representation which is cohomological with respect to the trivial coefficient system and such that its local component at $\pi_\p$ is the Steinberg representation of $\PGL_n(F_\p)$.
We assume for simplicity that all Hecke eigenvalues are rational.
By the result of many authors (see Theorem 2.1.1 of \cite{BLGGT} for an overview), one can attach a Galois representation $\rho_\pi$ to $\pi$ and the restriction $\rho_{\pi,\p}$ of $\rho_\pi$ to a decomposition group at $\p$ is of the following form: it is upper triangular and the $i$-th diagonal entry is the $n-i$-th cyclotomic character.

Let $\rho_{\pi,\p,1}$ be the $2$-dimensional subrepresentation given by the upper-left-$2\times2$-block of that matrix, i.e., $\rho_{\pi,\p,1}$ is an extension of $\Q_p(n-1)$ by $\Q_p(n-2)$.
It is known that this extension is not crystalline.
Thus, we can define the lattice
$$\Lambda_{\rho}=\Lambda_{\rho_{\pi,\p,1}(n)}.$$

Let $\mathcal{A}$ be the ring of analytic function on Drinfeld's upper half space of dimension $n-1$ over $F_\p$.
Under suitable strong multiplicity one assumptions \cite{Ge4}, Proposition 3.9, together with Gekeler's isomorphism implies that, for appropriate $\p$-arithmetic subgroups $\Gamma^{\p}\subseteq G(F)$, the $\pi$-isotypic part of $\HH^{\ast}(\Gamma^{\p},\mathcal{A}^{\times}/F_\p^{{\times}})\otimes \Q$ is concentrated in degree $n-2$ and non-zero.
Using \cite{GeR}, Theorem 4.3, and  \cite{Ge4}, Theorem 3.19, we see that (under mild assumptions on $\rho$) the map
$$(\HH^{n-2}(\Gamma^{\p},\mathcal{A}^{\times}/\Lambda_{\rho})\otimes \Q)^{\pi}\too (\HH^{n-2}(\Gamma^{\p},\mathcal{A}^{\times}/F_\p^{{\times}})\otimes \Q)^{\pi}$$
is an isomorphism and that
$$(\HH^{n-2}(\Gamma^{\p},\mathcal{A}^{\times})\otimes \Q)^{\pi}=0.$$

\def\cprime{$'$}

\end{document}